\ifpdf \usepackage{epstopdf}
\title[Hochschild homology of Soergel bimodules]
{A geometric model for Hochschild\\  homology of Soergel bimodules}
\author{\textsc{Ben Webster}&\textsc{Geordie Williamson}}
\address{School of Mathematics& Mathematisches Institut\\
         Institute for Advanced Study&der Universit\"at Freiburg\\
         Princeton, NJ 08540&Freiburg 79104\\
         USA& Germany\\}
\email{bwebste@math.berkeley.edu}
\urladdr{http://math.berkeley.edu/\~{}bwebste/}
\email{geordie.williamson@math.uni-freiburg.de}
\urladdr{http://home.mathematik.uni-freiburg.de/geordie/}
\keywords{Soergel bimodules, Hochschild homology, Khovanov-Rozansky homology}
  \newcommand{\nc}{\newcommand}
  \newcommand{\renc}{\renewcommand}
\nc{\xox}[2]{\textsf{#2}} 
\nc{\kos}[2]{\EuScript{K}_{#1,#2}}
\nc{\bet}{b}
\nc{\vp}{\varphi}
\nc{\betT}{b_T}
\nc{\de}{\delta}
\nc{\QT}{Q}
\nc{\HT}{S}
\nc{\ep}{\epsilon}
\nc{\Bi}{\mathbf{i}}
\nc{\BB}{B\times B}
\nc{\C}{\mathbb{C}}
\nc{\R}{\mathbb{R}}
\nc{\Cs}[1]{\underline{\C}_{#1}}
\nc{\IH}{I\!H}
\nc{\IC}{\mathbf{IC}}
\nc{\Gw}{G_w}
\nc{\up}{\vp^G_H}
\nc{\Kw}{K_w}
\nc{\B}{\mathcal{B}}
\nc{\SU}[1]{\mathrm{SU}(#1)}
\nc{\SL}[1]{\mathrm{SL}(#1)}
\nc{\GL}[1]{\mathrm{GL}(#1)}
\nc{\HU}[1]{\mathbb{H}\mathrm{U}(#1)}
\nc{\rank}{\mathrm{rk}}
\renc{\t}{\mathfrak t}
\nc{\td}{\t^*}
\nc{\g}{\mathfrak g}
\nc{\HG}{H_G}
\renc{\k}{\mathbf{k}}
\renc{\S}[1]{R_{#1}}
\nc{\Si}{\S i}
\nc{\hc}{\mathbb{H}^*}
\nc{\mc}{\mathcal}
\nc{\Hom}{\mathrm{Hom}}
\nc{\ti}{\tilde}
\renc{\O}{\mathcal {O}}
\nc{\hcBB}{\hc_{B\times B}}
\nc{\si}{\sigma}
\nc{\al}{\alpha}
\nc{\HH}{H\!H}
\nc{\Tor}{\mathrm{Tor}}
\nc{\KR}{\mc{KR}}
\nc{\Supp}{\mathrm{Supp}}
\nc{\ASu}{\mathrm{Supp}'}
\nc{\tri}{\tau}
\nc{\ext}{\mathrm{ext}}
\nc{\baet}[1]{\bar{e}(#1,t)}
\nc{\bht}[1]{\bar{h}(#1,t)}
\nc{\A}{\mathcal{A}}
\nc{\AH}{\A_H}
\nc{\AG}{\A_G}
\nc{\Lotimes}{\stackrel{L}{\otimes}}
\nc{\be}{\beta}
\nc{\ga}{\gamma}
\nc{\BS}[1]{G_{#1}}
\nc{\BSi}{\BS\Bi}
\nc{\Bf}{\mathbf{f}}
\nc{\excise}[1]{}
\nc{\End}{\mathrm{End}}
\nc{\dgmod}{\mathrm{dgMod}}
\nc{\res}{\mathrm{res}}
\nc{\ind}{\mathrm{ind}}
\nc{\For}{\mathrm{For}}
  \newtheorem{defi}{Definition}
  \newtheorem{thm}{Theorem}[section]
  \newtheorem{lem}[thm]{Lemma}
  \newtheorem{deth}[thm]{Definition/Theorem}
  \newtheorem{prop}[thm]{Proposition}
  \newtheorem{cor}[thm]{Corollary}
  \theoremstyle{remark}
  \newtheorem{remark}{Remark}
\begin{document}
\begin{abstract}
  An important step in the calculation of the triply graded link
  homology theory of Khovanov and Rozansky is the determination of
  the Hochschild homology of Soergel bimodules for $\SL n$.  We
  present a geometric model for this Hochschild homology for any
  simple group $G$, as equivariant intersection homology of $B\times
  B$-orbit closures in $G$.  We show that, in type A these orbit
  closures are equivariantly formal for the conjugation $T$-action. We
  use this fact to show that in the case where the corresponding orbit
  closure is smooth, this Hochschild homology is an exterior algebra
  over a polynomial ring on generators whose degree is explicitly
  determined by the geometry of the orbit closure, and describe its
  Hilbert series, proving a conjecture of Jacob Rasmussen.
\end{abstract}
\maketitle

\section{Introduction}
\label{sec:introduction}

In this paper, we consider the Hochschild homology of Soergel
bimodules, and construct a geometric interpretation of it.  This will
allow us to explicitly compute the Hochschild homology of a special
class of Soergel bimodules.

Soergel bimodules are bimodules over a polynomial ring, which appear
naturally both in the study of perverse sheaves on flag varieties and
of the semiring of projective functors on the BGG category $\O$.
Recently interest in them has been rekindled by the appearance of
connections with link homology as shown by Khovanov \cite{Kho05}.

Khovanov's work showed that one aspect of Soergel bimodules which had
not been carefully studied up to that date was, in fact, of great
importance: their Hochschild homology.  While the operation of taking
Hochschild homology is hard to motivate from a representation
theoretic perspective, we argue that it is, in fact, naturally
geometric, rather than purely combinatorial/algebraic.

Let $G$ be a connected reductive complex algebraic group, with Lie
algebra $\g$.
Let $B$ be a Borel of $G$,
$T$ a Cartan subgroup of $G$, $\mathfrak t$ be its Lie algebra, $n=\dim T$ be the rank of $G$, 
and $W=N_G(T)/T$ be the Weyl group of $G$. For any $w\in W$, we let
$\Gw=\overline{BwB}$.  Note that $T$ is a deformation retract of $B$,
so, we have $H^*_B(X)\cong H^*_T(X)$ for all $B$-spaces $X$.  We will
freely switch between $B$- and $T$-equivariant cohomology throughout
this paper.

We note that $\Gw$ is a closed subvariety of $G$ which is smooth if
and only if the corresponding Schubert variety $\B_w\subset G/B=\B$
is (and thus $\Gw$ typically singular).  Note that $B\times B$ acts on $\Gw$ by left and right
multiplication, and that restricting to the diagonal, we get the action
of $B$ by conjugation.  Of course, we also have left and right actions
of $B$ but we will never consider these separately.  We will {\bf
  always} mean the conjugation action.

Now consider the graded ring $\HT=H_T^*(pt,\C)=\C[\td] $, which is
endowed with the obvious $W$-action.  
Given a simple reflection $s$, denote by
$\S{s}$ the bimodule $\HT \otimes_{\HT^{s}} \HT[1]$ where $S^s$ is the subring of invariants under the reflection $s$ and, by convention, $[a]$ denotes the grading shift $(M[a])^i=M^{i+a}$.

We now come to the definition of Soergel bimodules:

\begin{defi} 
  A {\bf Soergel bimodule} is (up to grading shift) a direct summand
  of the tensor product $\S{\Bi} = \S s \otimes_S \S t \otimes_S \dots
  \otimes_S \S u$ in the category of graded $S$-bimodules, where $\Bi
  = (s, t, \dots, u)$ is a sequence of (not necessarily distinct)
  simple reflections.
\end{defi}

 The motivation for studying Soergel bimodules comes from
 representation theory, geometry and connections between the two. In
 fact, one has the following theorem:

 \begin{thm} {\rm (Soergel)} The indecomposable
   Soergel bimodules are parameterized (up to a shift in the grading)
   by the Weyl group \cite[Satz 6.14]{Soe05}. 

The irreducible Soergel bimodule corresponding
   to $w \in W$ may be obtained as
$\S w\cong I\!H^*_{\BB}(\Gw)$ \cite[Lemma 5]{Soe01}, where $I\!H^*$
  denotes intersection cohomology as defined by Goresky and MacPherson
  \cite{GM1}.
\end{thm}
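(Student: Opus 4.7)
The plan is to prove both assertions simultaneously by induction on the Bruhat order, linking the algebraic classification to the geometric one via the Bott--Samelson resolution and the $\BB$-equivariant decomposition theorem. The category of graded $\HT$-bimodules of finite type is Krull--Schmidt, so the first assertion reduces to isolating, for each $w \in W$, a distinguished indecomposable summand of the Bott--Samelson bimodule $\S{\Bi}$ (for any reduced expression $\Bi$ of $w$) which does not occur in $\S{\Bi'}$ for any reduced expression of any $w' < w$, and then arguing that every indecomposable summand of any $\S{\mathbf{j}}$ arises this way.

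For the geometric side I would work with the Bott--Samelson variety $\BSi = P_{s_{i_1}} \times^B \cdots \times^B P_{s_{i_k}}$ associated to $\Bi = (s_{i_1},\dots,s_{i_k})$, which admits a proper $\BB$-equivariant map $\pi_{\Bi}\colon \BSi \to \Gw$ with image $\Gw$ whenever $\Bi$ is reduced. Because $\BSi$ is built by successive $\mathbb{P}^1$-bundles whose base rings of invariants are precisely the $\HT^{s_{i_j}}$, induction on the length of $\Bi$ identifies $H^*_{\BB}(\BSi)$ with $\S{\Bi}$ as graded $\HT$-bimodules. The $\BB$-equivariant decomposition theorem applied to $\pi_{\Bi}$ then produces
\[
\S{\Bi} \;\cong\; \IH^*_{\BB}(\Gw) \oplus \bigoplus_{w' < w} \IH^*_{\BB}(G_{w'})^{\oplus m_{w'}}[\text{shifts}],
\]
in which the $\Gw$-summand appears with multiplicity one, corresponding to the trivial local system on the open orbit $BwB$.

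I would then run the induction. The base case $w = e$ is trivial, since $G_e = B$ and both $\S e$ and $\IH^*_{\BB}(G_e)$ reduce to $\HT$. Assuming the desired identification for all $w' < w$, the decomposition above exhibits $\IH^*_{\BB}(\Gw)$ as a direct summand of a Soergel bimodule, hence as itself a Soergel bimodule, and it manifestly depends only on $w$, not on the choice of $\Bi$. Its indecomposability follows because the equivariant stalk of the IC-sheaf at a point of the open orbit $BwB$ is one-dimensional in its minimal degree, forcing the endomorphism ring modulo the maximal ideal of $\HT \otimes \HT$ to be local. Conversely, Krull--Schmidt applied to arbitrary $\S{\mathbf{j}}$, combined with the decomposition above, shows that every indecomposable Soergel bimodule is isomorphic up to grading shift to some $\S w$.

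The main obstacle is the careful implementation of the $\BB$-equivariant decomposition theorem: one must check both that every simple perverse summand of $(\pi_{\Bi})_*\Cs{\BSi}[\dim \BSi]$ is supported on the closure of a $\BB$-orbit (which is immediate from $\BB$-equivariance together with the fact that the image of $\pi_{\Bi}$ is a union of such orbits) and that only trivial local systems appear (which follows from connectedness of the stabilizers of points in $BwB$ under the $\BB$-action). Once these geometric inputs are secured, together with the iterated-bundle identification $H^*_{\BB}(\BSi) \cong \S{\Bi}$, the combined algebraic classification and geometric description follow almost formally from the inductive procedure.
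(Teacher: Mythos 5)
You are not being compared against an argument in the paper: the paper does not prove this theorem, it quotes it from Soergel, citing \cite[Satz 6.14]{Soe05} for the classification and \cite[Lemma 5]{Soe01} for the identification $\S w\cong \IH^*_{\BB}(\Gw)$. Measured on its own terms, your sketch assembles the standard geometric skeleton correctly: the identification $H^*_{\BB}(\BSi)\cong\S\Bi$ via the iterated $\mathbb{P}^1$-bundle structure, the equivariant decomposition theorem for the proper map $\BSi\to\Gw$ with multiplicity one over the open orbit and only trivial local systems (connected stabilizers), and a Krull--Schmidt induction over the Bruhat order. (Note in passing that Soergel's own proof of Satz 6.14 is purely algebraic, via support filtrations of bimodules, and works for general Coxeter systems; your route is the geometric one available for Weyl groups over $\C$.)

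The genuine gap is the passage from sheaves to bimodules at the indecomposability and distinctness step. Simplicity of $\IC(\Gw)$ as an equivariant perverse sheaf, or one-dimensionality of its stalk in lowest degree on the open orbit, does not by itself control $\End_{\HT\otimes\HT}(\IH^*_{\BB}(\Gw))$: equivariant hypercohomology is not a priori fully faithful on these complexes, so the bimodule could admit idempotents invisible at the sheaf level, and your assertion that the generic stalk forces the endomorphism ring modulo the maximal ideal to be local is unsupported --- a bimodule endomorphism need not be determined by, nor compatible with, restriction over the open orbit unless one proves precisely that. Supplying this is the hard core of the cited results: Soergel's Erweiterungssatz/hom formula, equivalently his theory of the filtration of these bimodules by supports on twisted graphs inside $\td\times\td$, which yields both that $\End_{\HT\otimes\HT}(\IH^*_{\BB}(\Gw))$ is local and that $\S w\not\cong\S{w'}$ (even up to shift) for $w\neq w'$; the latter point, needed for ``parameterized by the Weyl group'' and implicitly for your claim that the distinguished summand of $\S\Bi$ does not occur for smaller $w'$, is never addressed in your sketch. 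Without this input your argument shows only that $\IH^*_{\BB}(\Gw)$ is a Soergel bimodule and that every indecomposable Soergel bimodule is, up to shift, a summand of some $\IH^*_{\BB}(G_{w'})$; the bijective parameterization does not follow.
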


We choose our grading conventions on $I\!H^*$ so that the 0th degree
is the mirror of Poincar\'e duality, that is, so that the lowest
degree component of $\S w$ sits in degree $-\ell(w)$.

More generally, Soergel bimodules can be identified with the
$\BB$-equivariant hypercohomology $\hcBB(G,\mc F)$ of sums $\mc F$ of
shifts of $\BB$-equivariant semi-simple perverse sheaves on
$G$.

Our central result is a geometric description of Hochschild homology $\HH_*(-)$
of these modules.

\begin{thm}\label{sec:introduction-1}
Let $\mc F$ be a semi-simple $\BB$-equivariant perverse sheaf on $G$ and let $R$ be its $\BB$-equivariant cohomology. 
  Then 
  \begin{equation*}
    \HH_*(\S{})\cong \hc_{B}(G,\mc F),
  \end{equation*}
  where $B$ acts by conjugation (i.e., by the diagonal inclusion
  $B\hookrightarrow\BB$). In particular, we have
  \begin{equation*}
    \HH_*(\S w)\cong\IH_{B}^*(\Gw).
  \end{equation*}
\end{thm}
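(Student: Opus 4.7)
The plan is to realize both sides of the claimed isomorphism as the same derived tensor product over $A := \HT\otimes\HT$. The diagonal inclusion $B\hookrightarrow\BB$ induces on point cohomology the multiplication map $A\to\HT$, with respect to which $\HT$ admits a length-$n$ Koszul resolution whose generators span $\td^*$. Since $\HH_*$ of an $\HT$-bimodule is by definition $\Tor^A_*(-,\HT)$, one has
\begin{equation*}
\HH_*(R) = R\Lotimes_A \HT
\end{equation*}
for any Soergel bimodule $R$, so it suffices to establish the change-of-groups identity
\begin{equation*}
\hc_B(G,\mc F)\;\cong\; \hc_{\BB}(G,\mc F)\Lotimes_A \HT. \qquad(\ast)
\end{equation*}
The first statement of the theorem then follows by taking $R = \hc_{\BB}(G,\mc F)$, and the second by specialising $\mc F = \IC_{\Gw}$ and applying Soergel's identification $\S w\cong\hcBB(G,\IC_{\Gw})$ recalled from the excerpt.

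To prove $(\ast)$ I would exhibit the relevant Borel constructions as a homotopy cartesian square
\begin{equation*}
\begin{array}{ccc}
EB\times_B G & \longrightarrow & E\BB\times_{\BB}G \\
\downarrow & & \downarrow \\
BB & \xrightarrow{\Delta} & BB\times BB,
\end{array}
\end{equation*}
in which both horizontal maps have homotopy fiber the torus $T$: for the top via the identification $\BB/B\simeq B\simeq T$ (the conjugation quotient), and for the bottom because the homotopy fiber of a diagonal $Y\to Y\times Y$ is $\Omega Y\simeq T$. One checks it is a homotopy pullback by comparing fibers over $BB$, where both sides realise $G$ and the induced map on fibers is the identity. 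Since $BB\times BB$ is simply connected and we work over $\C$, the Eilenberg--Moore spectral sequence associated with this square converges and presents $\hc_B(G,\mc F)$ as $\hc_{\BB}(G,\mc F)\Lotimes_A \HT$; here $\mc F$ enters only through the $A$-module structure of its equivariant hypercohomology, so the formalism is insensitive to which semi-simple perverse sheaf one plugs in.

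The step I expect to be the main obstacle is making Eilenberg--Moore rigorous in the sheaf-theoretic setting rather than just for constant coefficients. A concrete workaround is to build the Koszul resolution directly from the geometry: the top horizontal map $EB\times_B G\to E\BB\times_{\BB}G$ is a fiber bundle with fiber $T$, and the Leray spectral sequence for the pushforward of (the equivariantization of) $\mc F$ has $E_1$-page $\hc_{\BB}(G,\mc F)\otimes\Lambda\td^*$ with $d_1$ equal to the Koszul differential for $A\to\HT$. Semi-simplicity of $\mc F$ --- and the resulting equivariant formality of its hypercohomology --- then forces this spectral sequence to collapse to exactly the Koszul complex computing $\Tor_*^A(\hc_{\BB}(G,\mc F),\HT)$, establishing $(\ast)$ and with it the theorem.
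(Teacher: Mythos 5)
Your reduction of the theorem to the change-of-groups identity $(\ast)$, $\hc_B(G,\mc F)\cong \hc_{\BB}(G,\mc F)\Lotimes_{\HT\otimes\HT}\HT$, is exactly the skeleton of the paper's argument: there, $(\ast)$ is obtained from the Bernstein--Lunts equivalence $\Gamma_G\colon D^b_G(pt)\to\dgmod^f\A_G$, under which restricting equivariance from $\BB$ to the diagonal $B$ becomes derived extension of scalars along $\A_{\BB}\cong\HT\otimes\HT\to\A_B\cong\HT$, together with the fact that $\res^B_{\BB}\IC(\Gw)\cong\IC(\Gw)$. So the overall plan is sound; the problem lies in how you propose to prove $(\ast)$.

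The gap is the collapse step. Both of your routes (Eilenberg--Moore for the homotopy cartesian square, or the Leray/Serre sequence of the $T$-bundle $EB\times_B G\to E\BB\times_{\BB}G$) produce a spectral sequence whose $E_2$ (after the Koszul differential, $E_3$) is $\Tor^{\HT\otimes\HT}_*(\hc_{\BB}(G,\mc F),\HT)=\HH_*(\hc_{\BB}(G,\mc F))$ converging to $\hc_B(G,\mc F)$; the theorem is precisely the assertion that there are no further differentials (and that the filtration splits as graded $\HT$-modules). Your justification --- ``semi-simplicity of $\mc F$ and the resulting equivariant formality of its hypercohomology forces the collapse'' --- does not supply an argument. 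Equivariant formality in the GKM sense (freeness of $\hc_T$ over $\HT$ for the conjugation action) is a different statement: in the paper it is only established in type $A$, and only as a \emph{consequence} of this very theorem combined with Rasmussen's algebraic freeness result, so invoking it here is both unproven and circular; moreover freeness over $\HT$ would not by itself kill higher differentials in a spectral sequence of $\HT\otimes\HT$-modules. What is actually needed is formality at the cochain level: the dg-module $\Gamma_{\BB}(\mc F)$ over $\A_{\BB}$ must be quasi-isomorphic to its cohomology $\hc_{\BB}(G,\mc F)$, so that the derived tensor product defining $\hc_B$ coincides with the derived tensor product of the bimodule, i.e.\ with Hochschild homology. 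The paper proves this formality geometrically: $(m_{\Bi})_*\Cs{\BS{\Bi}}$ is an iterated induction/restriction, so its dg-module is the honest tensor product $\HT\otimes_{\HT^s}\HT\otimes_{\HT^t}\cdots\otimes_{\HT^u}\HT$ with zero differential, and the (equivariant) decomposition theorem --- this is where semi-simplicity genuinely enters --- exhibits $\IC(\Gw)$ as a direct summand, hence formal as well. Without this input (or an equivalent purity/weight argument), your collapse claim assumes exactly what has to be proved.
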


Unfortunately, $\IH_{B}^*(\Gw)$ has a single grading, whereas
$\HH_*(\S w)$ has two: by decomposition into the components $\HH_i$
(``the Hochschild grading''), and one coming from the grading on $\S
w$ (``the polynomial grading''). This isomorphism takes the single
grading on $\IH_B^*(\Gw)$ to the difference of the two gradings on
$\HH_*(\S w)$.

We can give a geometric interpretation of these gradings, but in a
somewhat roundabout manner.  A result of Rasmussen \cite{Ras06} shows
that

\begin{thm}\label{intro-e-f}
  Assume $G=\SL n$ or $\GL n$.  Then $\IC({G_w})$ is equivariantly formal.  Thus, the map
  \begin{equation*}
    i^*_T: \IH^*_T(G_w)\to \hc_T(T,\IC({G_w}))\cong H^*_T(T)\otimes \IC({G_w})_e
  \end{equation*}
  is injective and an isomorphism after tensoring with the fraction
  field of $S$.
\end{thm}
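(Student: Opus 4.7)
The plan is to reduce the equivariant formality claim to a Hochschild homology computation in type A carried out by Rasmussen, using Theorem~\ref{sec:introduction-1}, and then deduce the injectivity and fraction-field isomorphism of $i^*_T$ from the Borel/Atiyah--Bott localization theorem for equivariantly formal sheaves.

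For the first step, equivariant formality of $\IC(G_w)$ for the conjugation $T$-action is equivalent to $\IH^*_T(G_w)$ being free over $S=H^*_T(pt)$ of rank $\dim_\C \IH^*(G_w)$, i.e.\ to its graded Poincar\'e series being $p_w(t)/(1-t^2)^n$, where $p_w(t)$ is the Poincar\'e polynomial of $\IH^*(G_w)$.  Since $B$ deformation retracts onto $T$ and Theorem~\ref{sec:introduction-1} identifies $\IH^*_B(G_w)$ with $\HH_*(\S w)$ as graded $S$-modules, it suffices to show that $\HH_*(\S w)$ has the expected Poincar\'e series.  This is precisely what Rasmussen computes in \cite{Ras06} for $G=\SL n$ and $\GL n$; the matching of Poincar\'e series forces the spectral sequence $\IH^*(G_w)\otimes S\Rightarrow \IH^*_T(G_w)$ to collapse, giving equivariant formality.

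For the second step, the $T$-fixed locus of $G_w$ under conjugation is exactly $T$, because the centralizer of a maximal torus in a reductive group is the torus itself, and $T\subset B\subset G_w$.  Since $T$ is connected and acts trivially on itself, $\IC(G_w)|_T$ is $T$-equivariantly a constant sheaf with fiber $\IC(G_w)_e$, yielding $\hc_T(T,\IC(G_w))\cong H^*_T(T)\otimes\IC(G_w)_e$.  Given equivariant formality, $\IH^*_T(G_w)$ is $S$-torsion-free and thus injects into its localization; Borel/Atiyah--Bott localization applied to $\IC(G_w)$ identifies this with the corresponding localization of the fixed-locus contribution, giving both the injectivity of $i^*_T$ and the isomorphism over $\mathrm{Frac}(S)$.

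The main obstacle is in the first step: Rasmussen states his result in the bigraded language of Hochschild homology, while the equivariantly formal prediction is a single-graded Poincar\'e series on $\IH^*_T(G_w)$.  The comparison therefore depends on the grading dictionary indicated in the commentary just before the theorem statement, under which the equivariant grading corresponds to the difference of the Hochschild and polynomial gradings on $\HH_*(\S w)$; once this dictionary is in place, matching the Poincar\'e series is a routine check.
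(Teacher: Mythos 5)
There is a genuine gap in your first step: the input you attribute to Rasmussen is not what his paper provides. The result used here (\cite[Proposition 4.6]{Ras06}) is that the Hochschild homology $\HH_*(\S{\Bi})$ of the \emph{Bott--Samelson} bimodules is free over $S$ in type $A$; Rasmussen does not compute $\HH_*(\S w)$ or its Poincar\'e series for the indecomposable bimodules. Indeed, the Hilbert series of $\HH_*(\S w)$ is exactly what Rasmussen \emph{conjectured} and what this paper proves (Theorem~\ref{equiv-form} and its corollary), and even then only when $\Gw$ is smooth -- so taking that series as known input makes your argument circular. A second, smaller point: you do not need any Poincar\'e-series matching or the grading dictionary you single out as the ``main obstacle''. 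By Definition/Theorem~\ref{eq-for-def}(1), equivariant formality is already equivalent to freeness of $\IH^*_T(\Gw)$ over $S$, so freeness alone (transported through Theorem~\ref{sec:introduction-1}) suffices.

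The repair is the paper's route. Rasmussen's freeness result applies to $\HH_*(\S{\Bi})$; by Theorem~\ref{sec:introduction-1} this says $H^*_T(\BSi)$ is free over $S$, i.e.\ $\BSi$ (equivalently $m_*\Cs{\BSi}$) is equivariantly formal. Now take $\Bi$ a reduced word for $w$: by the (equivariant) decomposition theorem, $\IC(\Gw)$ is a direct summand of $m_*\Cs{\BSi}$, and a direct summand of an equivariantly formal sheaf is equivariantly formal -- algebraically, a graded direct summand of a finitely generated free graded $S$-module is projective, hence free. This yields equivariant formality of $\IC(\Gw)$ without ever knowing its Betti numbers. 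Your second step is fine and agrees with the paper: the conjugation fixed locus is $G_w^T=C_G(T)\cap G_w=T$, injectivity of $i^*_T$ follows from equivariant formality by \cite[Theorem 6.3]{GKM}, and the isomorphism after tensoring with $\mathrm{Frac}(S)$ is the localization theorem.
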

Furthermore, since $H^*_T(T,\IC({G_w}))\cong S\otimes_\C
H^*(T,\IC({G_w}))$ by the K\"unneth theorem, we can equip this $S$
module with a bigrading, with an ``equivariant'' grading from the
first factor, and a ``topological'' grading from the second.  

\begin{thm}\label{grading-match}
  The intersection cohomology $\IH^*_B(G_w)$ obtains ``topological''
  and ``equivariant'' gradings by transport of structure from the map
  $i^*$ and $\HH_*(\S w)$ obtains the same by the isomorphism of
  Theorem~\ref{sec:introduction-1}.  These are related to the
  ``Hochschild'' and ``polynomial'' gradings by
  \begin{equation*}
    \deg_t(\ga)=\deg_h(\ga) \hspace{15mm} \deg_e(\ga)=\deg_p(\ga)-2\deg_h(\ga)
  \end{equation*}
  where $\deg_*(x)$ denotes the degree of $x$ in the grading whose name begins with the letter $*$.
\end{thm}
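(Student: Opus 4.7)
The plan is to track both gradings through the standard Koszul resolution of $S$ over $S\otimes S$, exploiting the fact that the isomorphism of Theorem~\ref{sec:introduction-1} arises because passage from $\BB$- to $B$-equivariant cohomology along the diagonal corresponds, at the level of the $S\otimes S$-module $\hcBB(G,\IC(G_w))$, to the derived tensor product $-\Lotimes_{S\otimes S}S$, which is precisely the functor that computes Hochschild homology of $S$-bimodules.

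First, I would exhibit the left-hand side via the Koszul resolution of $S$ over $S\otimes S$: the complex $(S\otimes S)\otimes\Lambda^*[\xi_1,\ldots,\xi_n]$, whose exterior generators $\xi_i$ correspond to the kernel generators $\alpha_i\otimes 1-1\otimes\alpha_i$ of internal degree $2$. Each $\xi_i$ thus lives in Hochschild degree $1$ and polynomial degree $2$, so a Koszul representative $a\otimes\xi_{i_1}\wedge\cdots\wedge\xi_{i_k}$ for a class $\ga\in \HH_*(\S w)$ has Hochschild degree $k$ and polynomial degree $\deg(a)+2k$.

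Next, I would match this with the geometric picture. By Theorem~\ref{intro-e-f} and the K\"unneth formula, the map $i^*_T$ lands in $S\otimes_\C H^*(T,\IC(G_w)|_T)$, with the equivariant grading on the first factor and the topological grading on the second. The key compatibility claim is that the isomorphism of Theorem~\ref{sec:introduction-1}, composed with $i^*_T$, identifies the Koszul exterior generators $\xi_i$ with the degree-$1$ generators of $H^1(T)\cong\td^*$. I would establish this by naturality: for the base case $w=e$ (so that $G_e=B$ retracts onto $T$ and $\S e= S$), the isomorphism specialises to the classical identification
\begin{equation*}
\HH_*(S)\cong S\otimes\Lambda^*\td^*\cong H^*_T(T),
\end{equation*}
under which the Koszul generators literally are the topological generators of $H^1(T)$. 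For general $w$ the $\xi_i$ act on $\HH_*(\S w)$ through the natural $\HH_*(S)$-module structure, and on the geometric side the corresponding topological classes act on $\hc_T(T,\IC(G_w))$ through the K\"unneth module structure, so the two actions are intertwined and the identification propagates from the base case.

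Granted this identification, the grading formulas follow immediately: a class represented by $a\otimes\xi_{i_1}\wedge\cdots\wedge\xi_{i_k}$ corresponds to a class in $\hc_T(T,\IC(G_w))$ of topological degree $k$ (one per exterior factor) and equivariant degree $\deg(a)$. The former gives $\deg_t(\ga)=k=\deg_h(\ga)$, and the latter gives $\deg_e(\ga)=\deg(a)=\deg_p(\ga)-2k=\deg_p(\ga)-2\deg_h(\ga)$. The principal obstacle is the compatibility claim in the preceding paragraph; making it rigorous requires chasing the isomorphism of Theorem~\ref{sec:introduction-1} through an explicit Koszul-type resolution of the diagonal in the $\BB$-equivariant derived category and verifying that restriction to $T$ intertwines this resolution with the K\"unneth decomposition.
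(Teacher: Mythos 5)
There is a genuine gap in your propagation step. Your Koszul computation and your base case $w=e$ (equivalently, the bimodule $H^*_{T\times T}(T)\cong \HT$) are exactly the paper's final step, but the reduction of the general case to that base case cannot be done through the $\HH_*(\HT)$-module structure. The problem is that $\HH_*(\S w)$ is not generated over $\HH_*(\HT)\cong \HT\otimes\Lambda(\xi_1,\ldots,\xi_n)$ by its Hochschild-degree-zero part, so knowing where the $\xi_i$ go, together with the case $w=e$, does not determine the bidegrees of images of general classes. Concretely, for $G=\SL 2$ and $w=s$ one computes that $\HH_0(\S s)$ and $\HH_1(\S s)$ are each free of rank one over $\HT$, with the generator of $\HH_1(\S s)$ sitting four steps above that of $\HH_0(\S s)$ in the polynomial grading (it is the class $\ga_1$ of Theorem~\ref{ind-HH}, with $k_1=2$); since $\xi$ has bidegree $(1,2)$ and $\HH_1(\S s)$ vanishes in polynomial degree two above the generator of $\HH_0(\S s)$, the $\xi$-action on $\HH_0(\S s)$ is identically zero, so the module structure carries no information at all about $\HH_1(\S s)$. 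The same failure occurs whenever the generators of Theorem~\ref{ind-HH} have $k_i\neq 1$. Relatedly, your final ``follows immediately'' step is not licit even granting the compatibility claim: a class represented by $a\otimes\xi_{i_1}\wedge\cdots\wedge\xi_{i_k}$ is not $\xi_{i_1}\cdots\xi_{i_k}$ applied to a Hochschild-degree-zero class (exterior multiplication by $\xi_i$ on $\S w\otimes\Lambda$ is not even a chain map), so intertwining the two actions does not pin down the topological and equivariant degrees of its image.

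The missing ingredient is the functoriality clause of Proposition~\ref{formal-isom}, which is how the paper reduces to the base case: the restriction to the fixed locus, $H^*_T(\BSi)\to H^*_T(\BSi^T)$, is induced by a degree-zero morphism of bimodules $H^*_{\BB}(\BSi)\to H^*_{\BB}(\BSi^T)$ (the fixed locus of the conjugation action is $T\times T$-stable because $T$ is abelian), and the passage from $\BSi$ to $\Gw$ is induced by the inclusion of $\IC(\Gw)$ as a summand of $m_*\Cs{\BSi}$. Applying $\HH_*$ to a degree-zero bimodule map preserves both the Hochschild and the polynomial gradings, and by equivariant formality (Theorem~\ref{intro-e-f}) the resulting map is injective; hence the entire statement reduces to the target, a direct sum of shifted copies of $H^*_{T\times T}(T)\cong\HT$, where your Koszul computation applies verbatim. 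Replacing the module-structure propagation by this homogeneity-plus-injectivity reduction turns your argument into the paper's proof.
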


The case where $\Gw$ is smooth is of special interest to us, so any
reader who is unhappy with the presence of intersection cohomology and
perverse sheaves can restrict to that case, in which case intersection
cohomology is canonically isomorphic to \v{C}ech cohomology.

\begin{thm}\label{ind-HH}
  If $\Gw$ is smooth (in any type), then the Hochschild homology of
  $\S w$ is of the form
  \begin{equation*}
    \HH_*(\S w)=\land^\bullet(\ga_1,\ldots, \ga_n)\otimes_{\C}\HT
  \end{equation*}
  where $\{\ga_i\}_{i=1,\ldots,m}$ are generators with
  \begin{equation*}
    \deg_h(\ga_i)=1\hspace{15mm}\deg_p(\ga_i)=2k_i
  \end{equation*}
  for positive integers $k_i$ determined by the geometry of $\Gw$.
\end{thm}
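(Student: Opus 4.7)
The plan is to reduce $\HH_*(\S w)$ to equivariant cohomology via Theorem~\ref{sec:introduction-1} and then use smoothness of $\Gw$ to compute it by restriction to the $T$-fixed locus. Combining Theorem~\ref{sec:introduction-1} with the smoothness of $\Gw$ gives
\[
\HH_*(\S w) \cong \IH_B^*(\Gw) = H_B^*(\Gw) = H_T^*(\Gw),
\]
since $\IC(\Gw)$ reduces to (a shift of) the constant sheaf in the smooth case, and $B$ deformation retracts onto $T$.

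The main structural input is that $H^*(\Gw,\C)$ is itself an exterior algebra on $n$ odd-degree generators, in degrees $2k_i - 1$. For smooth $\Gw$ I would establish this inductively along the tower of minimal-parabolic bundles arising from a reduced expression of $w$, iterating the Serre spectral sequence so that at each stage either a new odd-degree fiber class appears or one transgresses against an equivariant Chern class on the base; the $k_i$ record the accumulated equivariant shifts. The resulting rank count $\dim H^*(\Gw) = 2^n = \dim H^*(T)$ then implies equivariant formality of the conjugation $T$-action on $\Gw$, whose fixed locus is $T \cap \Gw = T$ (using $T \subset B \subset \Gw$ and that $T$ is abelian); in type A this formality is also immediate from Theorem~\ref{intro-e-f}.

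Equivariant formality makes the restriction
\[
i^*: H_T^*(\Gw) \hookrightarrow H_T^*(T) = \HT \otimes_{\C} \land^\bullet(\ga'_1,\ldots,\ga'_n)
\]
injective, where the $\ga'_i \in H^1(T,\C)$ span the primitive part and $T$ acts trivially on itself by conjugation. Choosing equivariant lifts $\ga_i \in H_T^*(\Gw)$ of the topological generators so that $i^*(\ga_i) = f_i\,\ga'_i +$ (terms of strictly higher fiber degree), with $f_i \in \HT$ of polynomial degree $2k_i - 2$, realizes $H_T^*(\Gw) \cong \HT \otimes_{\C} \land^\bullet(\ga_1,\ldots,\ga_n)$. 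Tracking bidegrees, each $\ga_i$ has $\deg_t = 1$ and $\deg_e = 2k_i - 2$ in the K\"unneth bigrading on $H_T^*(T)$, so Theorem~\ref{grading-match} delivers $\deg_h(\ga_i) = 1$ and $\deg_p(\ga_i) = 2k_i$, as claimed.

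The principal obstacle is the structural assertion that $H^*(\Gw)$ is an exterior algebra on exactly $n$ generators. A rigorous argument will likely exploit the description of smooth $\Gw$ as an iterated minimal-parabolic tower (or, in type A, the classification of smooth Schubert varieties as iterated projective bundles via the Lakshmibai--Sandhya criterion), simultaneously identifying the $k_i$ as genuine geometric invariants of $\Gw$; keeping careful track of which equivariant Chern class appears at each transgression is what ultimately pins down the $k_i$.
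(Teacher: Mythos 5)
Your skeleton matches the paper's: use Theorem~\ref{sec:introduction-1} plus smoothness to identify $\HH_*(\S w)$ with $H^*_T(\Gw)$, get equivariant formality from the rank count $\dim_\C H^*(\Gw)=2^n=\dim_\C H^*(\Gw^T)$, and read off the bigrading through restriction to $T$ via Theorem~\ref{grading-match}. But the load-bearing step --- that $H^*(\Gw;\C)$ is an exterior algebra on exactly $n$ generators in degrees $2k_i-1$ --- is precisely what you do not prove, as you acknowledge, and the route you sketch for it would not deliver the theorem at its stated generality. The claim is ``in any type,'' while an iterated minimal-parabolic tower attached to a reduced word is a Bott--Samelson-type space that only maps birationally onto $\Gw/B$ (it is not isomorphic to it), and the description of smooth Schubert varieties as iterated Grassmannian bundles is a type~A phenomenon; neither gives you control of $H^*(\Gw)$ for a general smooth $\Gw$. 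The paper's actual mechanism is different and uniform: by Aky{\i}ld{\i}z--Carrell, $H^*(\Gw/B)$ is the quotient of $\HT$ by a regular sequence $(f_1,\ldots,f_n)$ (Lemma~\ref{schub-lemma}, which also pins down $k_i=\deg f_i$ in terms of heights of the inversion roots of $w$, with a separate product decomposition $G_w\cong G'_w\times T/T'\times(N\cap w_0'Nw_0')$ when $w$ lies in a proper parabolic); then the Hirsch lemma applied to the fibration $B\to\Gw\to\Gw/B$ computes $H^*(\Gw)$ as the cohomology of $H^*(\Gw/B)\otimes_{\HT}\EuScript{K}_T$, and regularity lets one replace $H^*(\Gw/B)$ by the Koszul complex $\EuScript{K}_{\Bf}$, yielding $\land^\bullet(\ga_1,\ldots,\ga_n)$ with $\deg\ga_i=2k_i-1$. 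Without this complete-intersection input (or a proved substitute), your argument has a genuine gap at its central claim.

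A secondary gap: the bidegree assertion. You posit lifts with $i^*(\ga_i)=f_i\,\ga'_i+\text{(higher fiber degree)}$ and then conclude $\deg_h(\ga_i)=1$, but this is asserted rather than derived, and as written a class with genuinely higher fiber-degree components would not be homogeneous of Hochschild degree one. In the paper the statement comes for free from the construction: the exterior generators arise from the degree-one term of the Koszul resolution used to compute $\HT\Lotimes_{\HT\otimes\HT}\S w$, i.e.\ they live in $\HH_1$ by construction, and Theorem~\ref{grading-match} then converts topological degree $2k_i-1$ into $\deg_p(\ga_i)=2k_i$. You need some argument of this kind to make the grading claim stick.
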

These integers can be calculated using the action of $w$ on the root
system, or in $\SL n$ by presenting $\Gw/B$ as an iterated Grassmannian bundle.

While the indecomposable modules $\S w$ are perhaps most natural from
the perspective of geometry or representation theory, definition (1)
(and the study of knot homology, which we discuss briefly in
Section~\ref{sec:knot-homol-soerg}) encourages us to concentrate on
the modules $ \S{\Bi}$.  We call these particular Soergel bimodules
{\bf Bott-Samelson} for reasons which will be clarified in
Section~\ref{sec:geom-soerg-bimod}.

Bott-Samelson modules are naturally identified with the equivariant homology of the ``groupy'' Bott-Samelson space
\begin{equation*}
  \BSi\cong P_{s}\times_{B} P_{t} \times_{B} \cdots\times_{B}P_{u}.
\end{equation*}
and essentially the analogues of all appropriate theorems connecting the $\BB$-orbit closures in $G$ with Soergel bimodules are true here.

\begin{thm}\label{BS-match} If $G=\SL n$ or $\GL n$, then 
  for all $\Bi$, we have
  \begin{equation*}
    \S{\Bi}\cong H^*_{\BB}(\BSi)\hspace{15mm}\HH_*(\S{\Bi})\cong H^*_{B}(\BSi).
  \end{equation*}
  The $T$-conjugation on $\BSi$ is equivariantly formal, and the injection
  \begin{equation*}
    i^*_T:H^*_T(\BSi)\to H_T^*(\BSi^T)
  \end{equation*}
  induces a bigrading on $H^*_T(\BSi)$ matching that on
  $\HH_*(\S{\Bi})$ as in Theorem~\ref{grading-match}.
\end{thm}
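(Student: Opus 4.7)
The plan is to apply the preceding theorems to the proper $\BB$-equivariant multiplication map $\mu\colon\BSi\to G$. Since $\BSi$ is smooth projective and $\mu$ is proper, the decomposition theorem expresses $R\mu_*\Cs{\BSi}$ as a direct sum of shifts of $\IC(\Gw)$'s, so it is a semi-simple $\BB$-equivariant complex to which Theorems~\ref{sec:introduction-1} and~\ref{intro-e-f} apply. Properness moreover gives $\hc_?(G,R\mu_*\Cs{\BSi})\cong\hc_?(\BSi)$ for $?=B$ and $?=\BB$.

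For the two cohomology identifications, the isomorphism $\S\Bi\cong H^*_{\BB}(\BSi)$ is classical: one inducts on $|\Bi|$ from the base case $\S s\cong H^*_{\BB}(P_s)$, using the twisted product structure $\BSi=P_s\times_B \BS{\Bi'}$ and a Leray--Hirsch type identification $H^*_{\BB}(P_s\times_B X)\cong \S s\otimes_S H^*_{\BB}(X)$. The identification $\HH_*(\S\Bi)\cong H^*_B(\BSi)$ then follows by applying Theorem~\ref{sec:introduction-1}, in the general semi-simple perverse form stated just above it, to $R\mu_*\Cs{\BSi}$. For equivariant formality: by Theorem~\ref{intro-e-f} each $\IC(\Gw)$ is equivariantly formal for $T$-conjugation in type $A$, a property that is stable under direct sums and shifts, so $H^*_T(\BSi)=H^*_T(G,R\mu_*\Cs{\BSi})$ is free over $S$. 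Freeness forces $i^*_T\colon H^*_T(\BSi)\to H^*_T(\BSi^T)$ to be injective, and since $T$ acts trivially on $\BSi^T$, the K\"unneth formula gives $H^*_T(\BSi^T)\cong S\otimes_\C H^*(\BSi^T)$, thereby inducing the desired bigrading on $H^*_T(\BSi)$.

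Finally, to match this bigrading with the Hochschild and polynomial gradings on $\HH_*(\S\Bi)$, we work summand-wise: on each $\IH^*_T(\Gw)$ summand of $H^*_T(\BSi)$, the desired match is supplied by Theorem~\ref{grading-match}. To reassemble these through $i^*_T$, we observe that any $T$-fixed element of $G$ under conjugation lies in $Z_G(T)=T$, so $\mu$ restricts to $\BSi^T\to T$, and the decomposition-theorem splitting of $H^*_T(\BSi)$ pulls back compatibly under this map. The principal obstacle lies in this compatibility step: one must verify that the bigrading read off from $i^*_T$ agrees with the direct sum of the bigradings furnished by Theorem~\ref{grading-match}, which requires a careful comparison between the topological $T$-fixed locus $\BSi^T$ on the one hand and the sheaf-theoretic restriction to $T\subset G$ used in Theorem~\ref{intro-e-f} on the other.
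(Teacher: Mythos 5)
Your identification of the two isomorphisms is essentially sound and close to the paper's route: the paper computes $\Gamma_{\BB}\bigl((m_{\Bi})_*\Cs{\BSi}\bigr)$ by an iterated induction--restriction argument (the dg-module form of your Leray--Hirsch induction), obtaining $\HT\otimes_{\HT^s}\cdots\otimes_{\HT^u}\HT$ together with formality, and then applies Proposition~\ref{formal-isom} to get $\HH_*(\S{\Bi})\cong H^*_B(\BSi)$. The trouble starts with equivariant formality. You derive freeness of $H^*_T(\BSi)$ from Theorem~\ref{intro-e-f}, but in the paper the logical order is exactly the reverse: the external input is Rasmussen's algebraic result that $\HH_*(\S{\Bi})$ is free over $\HT$ in type $A$ for the Bott--Samelson modules themselves; via Theorem~\ref{sec:introduction-1} this gives freeness of $H^*_T(\BSi)$, i.e.\ equivariant formality of $\BSi$, and only then is Theorem~\ref{intro-e-f} deduced, by exhibiting $\IC(\Gw)$ as a summand of $m_*\Cs{\BSi}$ for a reduced word. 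Since the paper has no independent proof of \ref{intro-e-f}, using it to establish the formality clause of \ref{BS-match} is circular within this paper; the argument should instead start from Rasmussen's freeness statement for $\HH_*(\S{\Bi})$.

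The genuine gap, which you yourself flag as the ``principal obstacle,'' is the bigrading comparison, and here invoking Theorem~\ref{grading-match} summand-wise does not help: the paper proves Theorems~\ref{grading-match} and~\ref{BS-match} simultaneously, by precisely the reduction your proposal is missing. The needed observations are: (i) the pullback $i^*_T\colon H^*_T(\BSi)\to H^*_T(\BSi^T)$ is induced by a morphism of formal sheaves, hence by a map of Soergel bimodules, and is therefore homogeneous for \emph{both} the Hochschild and polynomial gradings (and the same holds for the map coming from the inclusion of the summand $\IC(\Gw)\hookrightarrow m_*\Cs{\BSi}$), so the comparison of bigradings only needs to be made on the target; (ii) $\BSi^T$ is a disjoint union of copies of $T$ with trivial action, so everything reduces to the single space $T$; (iii) for $T$ the claim is an explicit computation, obtained by applying $\HH_0$ to the Koszul resolution of $H^*_{T\times T}(T)\cong \HT$ as a bimodule over itself, which puts the exterior generators in Hochschild degree $1$ and identifies the equivariant and topological gradings with $\deg_p-2\deg_h$ and $\deg_h$. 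Without steps (i)--(iii) your reassembly of summand bigradings through $i^*_T$ remains unverified, so the final assertion of the theorem is not actually proved in your proposal.
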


The structure of the paper is as follows: In
Section~\ref{sec:knot-homol-soerg}, we discuss the importance of the
Hochschild homology of Soergel bimodules in knot theory.  In
Section~\ref{sec:hochsch-homol-dg}, we prove
Theorem~\ref{sec:introduction-1}, using the formalism of dg-modules,
the relevant points of which we will summarize in
Section~\ref{sec:equi-derived-cat}.  In
Section~\ref{sec:geom-soerg-bimod}, we will cover in more detail how
to construct Soergel bimodules as equivariant intersection cohomology
of various varieties.  Finally, in Section~\ref{sec:equiv-form}, we
prove Theorems \ref{intro-e-f}\,--\,\ref{BS-match}.

\section*{Acknowledgments}
\label{sec:acknowledgments}

The authors would like to thank Catharina Stroppel, Olaf Schn\"urer,  Allen Knutson, Shrawan Kumar, Peter Fiebig,  Michel Brion and Tom Baird
for their helpful suggestions and Wolfgang Soergel for discussions
which greatly enhanced their understanding of the equivariant derived
category.

They would also like to thank the organizers of
the conference ``Microlocal and Geometric Methods in Representation
Theory'' and the staff of Schloss Reisensburg, where the authors first
began their collaboration, as well as Nicolai Reshetikhin, Henning
Haar Andersen and J{\o}rgen Ellegaard Andersen for making possible
their visits to {\AA}rhus, where this work progressed to its current
state.

B.W. was supported under an NSF Graduate Fellowship and NSF Postdoctoral
Fellowship, a Clay Liftoff Fellowship, the RTG grant DMS-0354321,
and a Danish National Research Foundation Niels Bohr Professorship
grant.

G.W. was supported by an Eleanor Sophia Wood traveling scholarship and a Liegrits predoc scholarship. This paper forms part of the his PhD thesis.

\section{Knot homology and Soergel bimodules}
\label{sec:knot-homol-soerg}

While Soergel bimodules merit study simply by being connected so much
delightful mathematics, we have applications in knot theory in mind,
as we will now briefly describe.  The interested reader can find more
details in the papers of Khovanov \cite{Kho05}, 
Rasmussen \cite{Ras06}, and Webster \cite{Web06a}.


The braid group $B_G$ of $G$ is a finitely presented group, with generators $\si_s$ for each simple reflection $s\in W$, which is defined by the presentation
\begin{align*}
 \si_s\si_t&=\si_t\si_s & \big(\text{when }(st)^2=e\big)\\
 \si_s\si_t\si_s&=\si_t\si_s\si_t & \big(\text{when }(st)^3=e\big)\\
(\si_s\si_t)^2&=(\si_t\si_s)^2 & \big(\text{when }(st)^4=e\big)\\
(\si_s\si_t)^3&=(\si_t\si_s)^3 & \big(\text{when }(st)^6=e\big)
\end{align*}
Note that if $G=\SL n$, then $B_n=B_G$ is the standard braid group familiar
from knot theory.

There are several natural weak actions of the braid group on category
$\mc O$ by families of functors (see, for example, \cite{AS,KM}),
which have an avatar on the bimodule side of the picture in the form
of a complex of bimodules attached to each braid group element.  The
description of these bimodule complexes can be found in various
sources, for example \cite{Kho05}, or for general Coxeter groups in
\cite{Rou04}.

Define the complexes of $S$-bimodules:
\begin{align*}
  F(\si_s)&=\cdots\longrightarrow \HT[-1]\longrightarrow \S{s}\longrightarrow  \hspace{2.5mm}0 \hspace{2.5mm}\longrightarrow \cdots\\
  F(\si_s^{-1})&=\cdots\longrightarrow \hspace{4mm}0 \hspace{4mm}\longrightarrow \S{s}\longrightarrow \HT[1]\longrightarrow \cdots
\end{align*}
where the maps between non-zero spaces are the unique (up to scalar)
non-zero maps of degree 0.  These maps are defined by (respectively) the pushforward and pullback in $\BB$-equivariant cohomology for the inclusion $B\hookrightarrow \overline{BsB}$.

\begin{thm} The shuffling complex
  \begin{equation*}
    F(\si)=F(\si_{i_1}^{\ep_1})\otimes_{\HT}\cdots \otimes_{\HT}F(\si_{i_m}^{\ep_m})
  \end{equation*}
  of a braid $\si=\si_{i_1}^{\ep_1}\cdots\si_{i_m}^{\ep_m}$ (where $\ep_i=\pm 1$) depends up to homotopy only on $\si$ not its factorization.  In particular,
  \begin{equation*}
    F(\si\si')=F(\si)\otimes_{\HT}F(\si'),
  \end{equation*}
  so $F$ defines a categorification of $B_G$.
\end{thm}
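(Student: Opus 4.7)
The plan is to verify that $F$ carries the defining relations of $B_G$ to homotopy equivalences of complexes of $\HT$-bimodules. Since those relations (together with invertibility of each generator) present $B_G$, it suffices to check two kinds of equivalence: (a) invertibility, $F(\si_s) \otimes_{\HT} F(\si_s^{-1}) \simeq \HT$ and the reverse composition, and (b) each braid relation of length $m_{st} \in \{2,3,4,6\}$ from the presentation, comparing the two total complexes that result from the two words on either side.

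For (a), I would unfold the double complex $F(\si_s) \otimes_{\HT} F(\si_s^{-1})$ and exhibit an explicit contracting homotopy of its acyclic part. The key inputs are the two short exact sequences $0 \to \HT[-2] \to \S{s} \to \HT \to 0$ and its dual, coming from the unit and counit of the adjunction between $\HT$ and $\HT^s$, together with the splitting of $\S{s} \otimes_{\HT} \S{s}$ as a direct sum of two grading-shifted copies of $\S{s}$ (reflecting that $\HT$ is free of rank two over $\HT^s$). After Gaussian elimination the complex collapses to $\HT$ in homological degree zero.

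For (b) with $m_{st}=2$, the reflections $s$ and $t$ act on disjoint root strings, so $\S{s} \otimes_{\HT} \S{t} \cong \HT \otimes_{\HT^{\langle s,t\rangle}} \HT$ canonically up to shift, which is manifestly symmetric in $s \leftrightarrow t$; a termwise identification of the two double complexes then finishes this case. For $m_{st} \in \{3,4,6\}$, one forms the total complexes on each side, uses Soergel's classification to identify the indecomposable summands of $\S{s}\otimes_{\HT}\S{t}\otimes_{\HT}\S{s}\otimes_{\HT}\cdots$ ($m$ factors) with elements of the dihedral parabolic $\langle s,t\rangle$, and then matches the two sides by Gaussian elimination of the contractible summands. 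The decomposition into indecomposables is intrinsically symmetric in $s \leftrightarrow t$, and this symmetry is ultimately what produces the homotopy equivalence.

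The main obstacle is precisely the $m_{st}\in\{3,4,6\}$ braid relations: keeping track of the grading shifts, signs, and the explicit form of the differentials in a multi-term total complex is delicate, and one must exhibit genuine chain homotopy inverses rather than merely identify isomorphic subquotients. This is the content of Rouquier's theorem \cite{Rou04}, and any direct proof essentially recapitulates his argument; alternatively, in type $A$ one can translate the entire verification into a geometric statement about convolution of perverse sheaves on $G/B \times G/B$ via the dictionary already developed in the paper, reducing each braid relation to the fact that the corresponding composition of intertwining Harish-Chandra bimodules (or equivalently, of twisting functors on category $\O$) is independent of factorization.
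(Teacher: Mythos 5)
The paper does not prove this theorem: it is stated as background, with the construction of the complexes and their invariance attributed to Khovanov \cite{Kho05} and, for general Coxeter groups, to Rouquier \cite{Rou04}. Your outline is exactly the standard reduction behind those references: since $B_G$ is presented by the braid relations, it suffices to check that each $F(\si_s)$ is invertible up to homotopy and that the two words in each braid relation yield homotopy equivalent complexes, and your treatment of invertibility (the two canonical short exact sequences relating $\S{s}$ and $\HT$, the splitting $\S{s}\otimes_{\HT}\S{s}\cong\S{s}[1]\oplus\S{s}[-1]$, Gaussian elimination) and of the $m_{st}=2$ case is correct in outline. However, for $m_{st}\in\{3,4,6\}$ you do not actually exhibit the homotopy equivalences; you correctly observe that this is the substance of Rouquier's theorem, so your proposal, like the paper itself, ultimately rests on the citation rather than providing an independent argument. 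One caveat about your suggested geometric alternative: independence of factorization for convolution of the corresponding sheaves, or for twisting functors on category $\O$, yields an isomorphism in a derived category (or of functors), whereas the theorem asserts a homotopy equivalence of bounded complexes of Soergel bimodules, which are not projective as $\HT$-bimodules; passing from the former to the latter requires an additional argument (e.g. via minimal complexes in the Krull--Schmidt homotopy category), so that route is not a shortcut around the explicit verification in \cite{Rou04}.
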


The maps in this complex also have a geometric interpretation: each
degree is a direct sum of Bott-Samelson modules for subsequences of
$\Bi$, and the ``matrix coefficients'' of the differential between
these are induced by pullback or pushforward maps on $\BB$-equivariant
cohomology for inclusions $\BS{\Bi'}\to\BS{\Bi''}$ of Bott-Samelson
spaces where $\Bi'$ and $\Bi''$ are subsequences of $\Bi$ which differ
by a single index.

Even better, this complex can be used to find a knot invariant, as was
shown by Khovanov \cite{Kho05}.  Let $\HH_*(\S{})$ be the Hochschild
homology of $\S{}$, which can be defined (using the standard equivalence
between $\HT-\HT$-bimodules with $\HT\otimes\HT^{op}$) by
\begin{equation*}
  \HH_i(\S{})=\Tor^i_{\HT\otimes\HT^{op}}(\HT,\S{}).
\end{equation*}
This can be calculated by the Hochschild complex of $\HT$ (which is
often used as a definition), or by the Koszul complex, both of which
are free resolutions of $\HT$ as an $\HT\otimes\HT^{op}$-module.

In the case where $G=\SL n$, Hochschild homology is a categorification
of the trace on the braid group defined by Jones \cite{Jon87}.
Remarkably, combining these creates a categorification of knot
polynomials, which had previously been defined by Khovanov and
Rozansky.

\begin{thm}
  {\rm (Khovanov, \cite{Kho05})} As a graded vector space, the homology $\KR(\bar\si)$ of the
  complex $\HH_i(F( \si))$ is depends only on the knot $\bar\si$, and
  in fact, is precisely the triply graded homology defined by Khovanov
  and Rozansky in \cite{KR05}.
\end{thm}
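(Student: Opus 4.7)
The plan is to establish both assertions by a two-step program: first, deducing topological invariance of $\HH_*(F(\si))$ from Markov's theorem, which reduces link invariance for braid closures to invariance under (a) conjugation $\si \mapsto \tau\si\tau^{-1}$ inside each $B_m$ and (b) positive and negative stabilization $\si \mapsto \si\si_m^{\pm 1}$ from $B_m$ into $B_{m+1}$; and second, identifying the resulting invariant with the Khovanov--Rozansky construction by matching skein-type recursions.

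For conjugation invariance I would invoke the trace property of Hochschild homology: for any $\HT$-bimodules (or complexes thereof) $M,N$ one has a natural isomorphism $\HH_*(M\otimes_{\HT} N)\cong \HH_*(N\otimes_{\HT} M)$, which extends to complexes through the usual spectral sequence. Combined with the fact that $F$ categorifies $B_G$, so that $F(\tau)\otimes_{\HT} F(\tau^{-1})$ is homotopy equivalent to $\HT$, this gives
\begin{equation*}
\HH_*(F(\tau\si\tau^{-1}))\cong \HH_*\bigl(F(\si)\otimes_{\HT} F(\tau^{-1})\otimes_{\HT} F(\tau)\bigr)\cong \HH_*(F(\si)).
\end{equation*}

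For stabilization, with $\si\in B_m$ regarded inside $B_{m+1}$ and the new crossing $\si_m^{\pm 1}$ appended, I would compute $\HH_*\bigl(F(\si)\otimes_{\HT} F(\si_m^{\pm 1})\bigr)$ directly. Since $F(\si)$ is built only from the bimodules $\S{s_i}$ with $i<m$, it is free with respect to the extra polynomial generator attached to the last strand; a Koszul-resolution computation then separates the contribution of the stabilizing crossing, expressing the answer as $\HH_*(F(\si))$ times an explicit factor. The factor is precisely the grading shift required to reproduce the Markov trace normalization of HOMFLY, so invariance under stabilization holds up to the conventional overall shifts.

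Finally, to identify $\KR(\bar\si)$ with the Khovanov--Rozansky theory of \cite{KR05}, the cleanest strategy is axiomatic: verify that both theories satisfy the same skein-type recursion on oriented link diagrams with the same value on the unknot. The two-term complexes $F(\si_s^{\pm 1})$ yield an exact triangle whose image under $\HH_*$ reproduces the MOY-type recursion on the matrix-factorization side, while the unknot contributes $\HH_*(\HT)\cong\land^\bullet(\td)\otimes_{\C}\HT$, which agrees with the KR value on the unknot. The main obstacle, and by far the most delicate part of the argument, is the precise matching of the three gradings: the Hochschild, polynomial and homological gradings on the Soergel side must be identified with the three gradings of the Khovanov--Rozansky construction, which demands careful tracking of the shifts introduced by the differentials in $F(\si_s^{\pm 1})$, by the Koszul resolution used to compute $\HH_*$, and against the normalization conventions of \cite{KR05}.
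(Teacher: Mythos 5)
This theorem is not proved in the paper at all: it is quoted as Khovanov's result from \cite{Kho05}, so the only meaningful comparison is with Khovanov's own argument. Your Markov-move skeleton (conjugation invariance from the trace property $\HH_*(M\otimes_{\HT}N)\cong\HH_*(N\otimes_{\HT}M)$ together with $F(\tau)\otimes_{\HT}F(\tau^{-1})\simeq \HT$, stabilization by a partial Koszul computation in the variable attached to the new strand) is the standard route and is sound in outline, though the stabilization step is more delicate than ``times an explicit factor'': for one of the two stabilizations the resulting two-column complex does not split, and one must identify the connecting map and show the homology collapses onto a shifted copy of $\HH_*(F(\si))$ for the braid on fewer strands, with invariance holding only up to overall shifts that have to be normalized consistently.

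The genuine gap is your final step. A skein-type exact triangle together with the value on the unknot does not characterize a categorified invariant: at the decategorified level such a recursion determines the HOMFLY polynomial, but there is no uniqueness theorem saying that two triply graded theories fitting into the same long exact sequences with the same unknot value are isomorphic (the triangles do not even determine the homology of a single link, only constrain it). So ``both satisfy the same MOY-type recursion'' cannot deliver the isomorphism with the theory of \cite{KR05}. Khovanov's identification is instead a direct chain-level comparison: he shows that the Khovanov--Rozansky complex of matrix factorizations attached to a braid word simplifies, after excluding variables, to the Koszul complex computing $\HH_*$ of the Rouquier complex $F(\si)$ of Soergel bimodules, matching the elementary piece at each crossing with the two-term complexes $F(\si_s^{\pm1})$ and matching the braid closure with the Koszul resolution of $\HT$ over $\HT\otimes\HT$; the agreement of the three gradings is then read off from this explicit isomorphism rather than deduced from an axiomatic characterization. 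Either replace your axiomatic step with such a chain-level identification, or restrict your claim to the statement that your construction is a link invariant with the same graded Euler characteristic.
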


Applying Theorem~\ref{sec:introduction-1} to our remarks above, we can understand the differentials of the complex $\HH_i(F( \si))$ in terms of pullback and pushforward on $B$-equivariant cohomology.

\section{The equivariant derived category and dg-modules}
\label{sec:equi-derived-cat}

Since our readers may be less well-acquainted with the formalism of
equivariant derived categories and their connection with dg-modules,
as developed by Bernstein and Lunts, in this section, we will provide
a brief overview of the necessary background for later sections.  This
material is discussed in considerably more detail in their monograph
\cite{BL}.


Suppose a Lie group $G$ operates on a space $X$. We have maps:
\begin{align*}
m &\colon G \times X \to X & m(g,x) &= g\cdot x \\
\pi &\colon G \times X \to X & \pi(g,x) &=x
\end{align*}
A function $f$ on $X$ is $G$-invariant if and only if $m^*f =
\pi^*f$. It is therefore natural
to define a $G$-equivariant sheaf on $X$ to be a sheaf $\mathcal{F}$
on $X$ together with an isomorphism $\theta : m^*\mathcal{F} \to
\pi^*\mathcal{F}$. (There is also a cocycle condition that we ignore here).

One can show that if $G$ operates topologically
freely on $X$ with quotient $X / G$ then the categories of
$G$-equivariant sheaves on $X$ and sheaves on $X/G$ are equivalent.

Faced with a $G$-space, one would wish to define an ``equivariant derived category''. This should
associate to a pair $(G,X)$ a triangulated category $D^b_G(X)$ together
with a ``forgetting $G$-equivariance'' functor $D^b_G(X) \to D^b(X)$. For
any reasonable definition of $D^b_G(X)$, there should be an equivalence
$D^b_G(X) \cong D^b(X/G)$ if $G$ acts topologically freely as well as 
notions of pullback and pushforward for equivariant maps.



The trick is to notice that, at least homotopically, we may assume
that the action is free: we ``liberate'' $X$ (i.e. make it free) by
replacing it with $X \times EG$ where $EG$ is the total space of the
universal $G$-bundle (i.e. any contractible space on which $G$ acts
freely). The first projection $p : X \times EG \to X$ is a homotopy
equivalence (because $EG$ is contractible) and the diagonal operation
of $G$ on $X \times EG$ is free. Thus, we can consider the quotient
map $q : X \times EG \to X \times_G EG$ as ``liberation'' of $X\to
X/G$. The following definition then makes sense:

\begin{defi} 
The \emph{(bounded) equivariant derived category} $D^b_G(X)$ is the
  full subcategory of $D^b(X \times_G EG)$ consisting of complexes
  $\mathcal{F} \in D^b(X \times_G EG)$ such that $q^*\mathcal{F} \cong
  p^*\mathcal{G}$ for some complex $\mathcal{G} \in D^b(X)$. 
\end{defi}

In the case where $X$ is a single point, we have $X\times_G EG\cong
EG/G$, which is usually denoted $BG$.

\begin{remark} 
  This is not exactly Bernstein and Lunts' definition. Consider the
  following diagram of spaces:
  \begin{equation*}
    X \stackrel{p}{\gets} X \times EG \stackrel{q}{\to} X \times_G EG
  \end{equation*}
  They define an equivariant sheaf to be a tuple $(\mc G, \mc F,\al)$
  where $\mathcal{G}\in D^b(X)$, $\mathcal{F}\in D^b(X\times_G EG)$,
  and $\alpha: p^*\mathcal{G}\to q^*\mathcal{F}$ is an isomorphism.
  However, the functor to the above definition
  which forgets everything except for $\mathcal{G}$ is an equivalence
  of categories.
\end{remark}

\begin{remark} As previously mentioned, it is natural to expect a
  ``forgetting $G$-equivariance functor'' $\For : D^b_G(X) \to
  D^b(X)$. With $p$ and $q$ as in the previous remark, we may define
  $\For(\mathcal{F}) = p_*q^*\mathcal{F}$. If $1 \hookrightarrow G$ is
  the inclusion of the trivial group, then $\For(\mathcal{F}) \cong
  \res_G^1 \mathcal{F}$ ($\res_G^1$ is defined below). \end{remark}

  If $H\subset G$ is a subgroup, then we may take $EG$ for $EH$ and we
  have a natural map
\begin{equation*}
\up\colon X\times_H EG\to X\times_G EG
\end{equation*}
commuting with the projection
to $X$.
The pullback and pushforward by this map induce functors
\begin{equation*}
  (\up)^*=\res^G_H\colon D^b_G(X)\to D^b_H(X)\hspace{6mm}(\up)_*=\ind_G^H\colon D^b_H(X)\to D^b_G(X)
\end{equation*}

Similarly, for any $G$-space we have a map $X \times_G EG \to BG$. If $X$ is a reasonable space (for example a complex algebraic variety with the classical topology) push-forward yields a functor
\begin{displaymath}
\pi_* : D^b_G(X) \to D^b_G(pt)
\end{displaymath}
which commutes with the induction and restriction functors.

In this work we are interested in equivariant cohomology for connected Lie
groups. These emerge as the cohomology of objects living in
$D_G(pt)$. The first key observation of Bernstein and Lunts' is
the following:

\begin{prop} If $G$ is a connected Lie group then $D^b_G(pt)$ is the triangulated subcategory of $D^b(BG)$ generated by the constant sheaf. \end{prop}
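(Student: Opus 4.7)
The plan is to prove the stated equality by establishing both inclusions between $D^b_G(pt)$ and $\langle\Cs{BG}\rangle$, where $\langle\Cs{BG}\rangle$ denotes the triangulated subcategory of $D^b(BG)$ generated by the constant sheaf.

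For the easy inclusion $\langle\Cs{BG}\rangle\subseteq D^b_G(pt)$, I would first observe that $q^*\Cs{BG}\cong\Cs{EG}\cong p^*\Cs{pt}$, so $\Cs{BG}$ itself lies in $D^b_G(pt)$. Since both $p^*$ and $q^*$ are exact, the defining condition ``$q^*\mc F\cong p^*\mc G$ for some $\mc G\in D^b(pt)$'' is closed under shifts and cones, making $D^b_G(pt)$ a triangulated subcategory of $D^b(BG)$; hence it contains the subcategory generated by $\Cs{BG}$.

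For the reverse inclusion, I would take $\mc F\in D^b_G(pt)$ with $q^*\mc F\cong p^*\mc G$ and argue first that every cohomology sheaf $\mc H^i(\mc F)$ is a constant sheaf on $BG$. Indeed, $\mc G\in D^b(pt)$ is quasi-isomorphic to $\bigoplus_i H^i(\mc G)[-i]$, so $p^*\mc G$ is a direct sum of shifts of $\Cs{EG}$; consequently $q^*\mc H^i(\mc F)=\mc H^i(q^*\mc F)$ is a constant sheaf on $EG$ for each $i$. Local triviality of the principal $G$-bundle $q$ then forces $\mc H^i(\mc F)$ to be locally constant on $BG$ (pull back via a local section of $q$), so $\mc H^i(\mc F)$ is a local system on $BG$. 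Since $G$ is connected, $\pi_1(BG)=\pi_0(G)=1$, so every such local system is isomorphic to a constant sheaf.

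To finish, I induct on the amplitude of $\mc F$ (the number of nonzero cohomology sheaves). In the base case $\mc F\cong\Cs{BG}^{\oplus n}[-i]$, which manifestly lies in $\langle\Cs{BG}\rangle$. For the inductive step, choose $i$ minimal with $\mc H^i(\mc F)\neq 0$ and consider the truncation triangle $\mc H^i(\mc F)[-i]\to\mc F\to\tau_{>i}\mc F\to\mc H^i(\mc F)[1-i]$; since $q^*$ commutes with truncation, both outer terms lie in $D^b_G(pt)$ with strictly smaller amplitude, hence by induction in $\langle\Cs{BG}\rangle$, and so does $\mc F$. The main obstacle is really the constancy step for cohomology sheaves, which uses the connectedness hypothesis in an essential way (if $G$ were disconnected, nontrivial local systems on $BG$ would lie in $D^b_G(pt)\setminus\langle\Cs{BG}\rangle$); some technical care is also needed because $BG$ is infinite-dimensional, so in practice one works with finite-dimensional approximations to $EG$ in the Bernstein--Lunts formalism.
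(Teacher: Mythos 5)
The paper does not actually prove this proposition: it is quoted as the ``first key observation'' of Bernstein and Lunts and cited to \cite{BL}, so there is no in-paper argument to compare yours with. What you have written is essentially the standard Bernstein--Lunts argument: show the cohomology sheaves of any $\mc F\in D^b_G(pt)$ are locally constant by pulling back along local sections of the principal bundle $q\colon EG\to BG$, use $\pi_1(BG)\cong\pi_0(G)=1$ (this is exactly where connectedness enters, as you say) to conclude they are constant, and then run a d\'evissage over the truncation triangles. That outline is correct, and your remark about disconnected $G$ correctly identifies why the hypothesis is needed.

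Two steps are stated more glibly than they deserve. First, in the ``easy'' inclusion, exactness of $p^*$ and $q^*$ alone does not give closure of $D^b_G(pt)$ under cones: if $\mc F_1\to\mc F_2\to\mc F_3$ is a triangle with $q^*\mc F_j\cong p^*\mc G_j$ for $j=1,2$, you need to know that the cone of the induced morphism $p^*\mc G_1\to p^*\mc G_2$ is again of the form $p^*\mc G_3$. This holds because $p^*\colon D^b(pt)\to D^b(EG)$ is fully faithful, which uses contractibility of $EG$ (equivalently $\mathrm{Ext}^{>0}(\Cs{EG},\Cs{EG})=H^{>0}(EG)=0$, so morphisms between pullbacks descend and complexes on $EG$ with constant cohomology split); ``$p^*$ and $q^*$ are exact'' is not by itself a reason. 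Second, your base case $\mc F\cong\Cs{BG}^{\oplus n}[-i]$ tacitly assumes the stalks are finite dimensional: the triangulated subcategory generated by $\Cs{BG}$ contains only finite-rank constant sheaves, so one must impose the finiteness conditions under which Bernstein and Lunts work (or enlarge the generated subcategory by the relevant direct sums). With these points repaired, and with the passage to finite-dimensional approximations $E_nG$ that you mention to make sense of $D^b(BG)$, the argument is sound.
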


It turns out that this observation allows Bernstein and Lunts to give an algebraic description of $D_G(pt)$. For this we need the language of differential graded algebras and modules.

\begin{defi} 
  A \emph{differential graded algebra} (or \emph{dg-algebra}) is a
  unital, graded associative algebra $\mc{A} = \oplus_{i \in \mathbb{Z}} A_i$
  together with an additive endomorphism $d : \mc{A} \to \mc{A}$ of degree 1 such
  that:
\begin{enumerate}
\item $d$ is a differential: i.e. $d^2 = 0$.
\item $d$ satisfies the Leibniz rule: $d(ab) = (da)b + (-1)^{\deg a} a(db)$.
\item $d(1_\mc{A}) = 0$, where $1_\mc{A}$ denotes the identity of $\mc{A}$.
\end{enumerate}
A \emph{left differential graded module} (or \emph{left dg-module}) over a differential graded algebra $\mc{A}$ is a graded left $\mc{A}$-module $M$ together with a differential $d_M : M \to M$ of degree 1 satisfying:
\begin{enumerate}
\item $d_M^2 = 0$.
\item $d_M(am) = (da)m + (-1)^{\deg a} a(d_Mm)$.
\end{enumerate}
A morphism of dg-modules is a graded $\mc{A}$-module homomorphism $f: M \to
M^\prime$ commuting with the differentials.
\end{defi}

\begin{remark} 
  If $\mc{A} = A_0$ is concentrated in degree zero, then a
  differential graded module is just a chain complex of
  $\mc{A}$-modules. 
\end{remark}
Given any dg-module $\mc{M}$ we may consider $H^*(\mc{M})$, which is a
graded module over the graded algebra $H^*(\mc{A})$.  As with the
category of modules over an algebra, the category of dg-modules over a
dg-algebra has a homotopy category and a derived category as defined by Bernstein and Lunts \cite{BL}.

\begin{defi}
  A map of dg-modules $f:\mc{M}\to \mc{M}'$ is a {\bf quasi-isomorphism} if the induced map $H^*(\mc{M})\to H^*(\mc{M}')$ on cohomology is an isomorphism.

  The {\bf derived category} of dg-modules for the dg-algebra $\mc{A}$, which we
  denote by $\dgmod{\mc{A}}$, is the category whose objects are dg-modules,
  and whose morphisms are compositions of chain maps and formal
  inverses to quasi-isomorphisms. We denote by $\dgmod^f{\mc{A}}$ the full
  subcategory consisting of dg-modules, finitely generated over $\mc{A}$.
\end{defi}

Given a morphism $\mc{A} \to \mc{A}^{\prime}$ of dg-algebras we would like to
define functors of restriction and extension of scalars. Restriction
of scalars is unproblematic (acyclic complexes are mapped to acyclic
complexes) however a little more care is needed in defining extension
of scalars.  Just as in the normal derived category, one needs a
special class of objects in order to define functors. In $\dgmod{\mc{A}}$
these are the $\mathcal{K}$-projective objects \cite{BL} which we will
not discuss in complete generality. In the sequel we will only be
interested in a special class of dg-algebras in which it is possible
to construct $\mathcal{K}$-projective objects rather explicitly:

\begin{prop}[Proposition 11.1.1 of \cite{BL}] 
  Let $\mc A = \C[x_1, \dots x_n]$ be viewed as a dg-algebra by setting
  $d_{\mc A} = 0$ and requiring that each $x_i$ have even degree. Then all
  dg-modules which are free as $\A$-modules are
  $\mathcal{K}$-projective.
\end{prop}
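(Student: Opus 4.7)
I want to show that $\Hom^{\bullet}_{\mc A}(P,N)$ is acyclic whenever $N$ is an acyclic dg-module, which is the defining property of $\mathcal{K}$-projectivity. Choose a homogeneous $\mc A$-basis $\{e_i\}_{i\in I}$ for $P$, so that $P \cong \mc A \otimes_{\C} V$ as graded $\mc A$-modules, with $V=\bigoplus_i \C\cdot e_i$. Under this identification $\Hom^{\bullet}_{\mc A}(P,N) \cong \Hom^{\bullet}_{\C}(V,N)$ as graded vector spaces, carrying a differential that combines $d_N$ on the target with a twist coming from the restriction of $d_P$ to $1\otimes V$.

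\textbf{Filtration and associated graded.} The key feature of our dg-algebra is that $d_{\mc A}=0$ and every homogeneous element of $\mc A$ has even, non-negative degree. By the Leibniz rule this gives $d_P(am)=(-1)^{|a|}a\cdot d_P(m)$ for $a\in\mc A$, so the decreasing $\mc A^+$-adic filtration $F^pP=(\mc A^+)^p\cdot P$ is preserved by $d_P$. On the associated graded dg-module one has an isomorphism $\operatorname{gr}P \cong \mc A\otimes_{\C}(V,d_V)$, where $d_V\colon V\to V$ is the $\mc A$-degree-zero component of $d_P$ on $1\otimes V$ (a differential because $d_P^2=0$ reduces mod $\mc A^+$ to $d_V^2=0$). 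For such an ``induced'' dg-module, adjunction gives $\Hom^{\bullet}_{\mc A}(\mc A\otimes V, N)\cong \Hom^{\bullet}_{\C}((V,d_V), N)$; since $\C$ is a field, $(V,d_V)$ is homotopy equivalent to its cohomology viewed with zero differential, so this Hom complex is quasi-isomorphic to a product of shifted copies of $N$, which is acyclic because products of acyclic complexes of $\C$-vector spaces remain acyclic.

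\textbf{Spectral sequence and main obstacle.} The filtration $F^{\bullet}$ on $P$ induces one on $\Hom^{\bullet}_{\mc A}(P,N)$, giving a spectral sequence whose $E_1$-page is the acyclic complex computed above; convergence of this spectral sequence then forces $\Hom^{\bullet}_{\mc A}(P,N)$ itself to be acyclic, proving the proposition. The chief technical obstacle is precisely this convergence when $V$ is infinite-dimensional or unbounded in degree, since the filtration on the Hom complex need not be complete even though $F^\bullet P$ is Hausdorff and exhaustive. This can be circumvented by constructing a null-homotopy directly: well-order the basis $\{e_i\}$ compatibly with $F^\bullet$ and solve the chain-homotopy equations $d_N h(e_i) = f(e_i)\pm\sum_j a_{ij}h(e_j)$ by transfinite recursion, using acyclicity of $N$ at each stage; the parity constraint---each nonzero coefficient $a_{ij}$ has even degree, forcing $|e_j|$ to differ from $|e_i|$ in controlled fashion---ensures that the right-hand side is inductively a cycle whenever $h$ is called upon to be defined.
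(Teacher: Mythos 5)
The paper itself offers no proof of this proposition: it is quoted verbatim from Bernstein--Lunts (Proposition 11.1.1 of \cite{BL}), and in the sequel it is only ever applied to the finitely generated modules of $\dgmod^f$. Your first two steps are sound and, as far as they go, reproduce the natural argument: for $P\cong\mc A\otimes_{\C}V$ the $\mc A^+$-adic filtration is stable under $d_P$ (indeed $d_P$ is $\mc A$-linear since $\mc A$ is even with zero differential), the associated graded is the induced module $\mc A\otimes(V,d_V)$, and $\Hom_{\mc A}(\mc A\otimes(V,d_V),N)\cong\Hom_{\C}((V,d_V),N)$ is acyclic for acyclic $N$ because $\C$ is a field and products of acyclic complexes of vector spaces are acyclic. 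Moreover, whenever the degrees of the basis are bounded below (in particular for finitely generated $P$, which is all this paper needs), the filtration is finite in each total degree --- the generators $x_i$ have positive degree --- so your spectral sequence converges and the argument is complete.

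The gap is the general case, and it is exactly where you locate it, but the proposed repair does not close it. Constructing a null-homotopy $h$ requires, for each basis element $e_i$, the values $h(e_j)$ for every $j$ with $a_{ij}\neq 0$, and this dependency relation need not be well-founded: degree-zero coefficients of $d_P$ connect $e_i$ to basis elements of degree $|e_i|+1$, so dependencies can climb indefinitely if the basis is unbounded above, while coefficients of positive degree let them descend indefinitely if it is unbounded below; hence no well-ordering of the basis places all dependencies of each element before it, and ``compatibility with $F^\bullet$'' does not help since all basis elements sit in the same $\mc A^+$-adic layer. Furthermore, the assertion that ``the right-hand side is inductively a cycle'' is precisely the point that needs proof: verifying $d_N\bigl(f(e_i)\mp\sum_j a_{ij}h(e_j)\bigr)=0$ uses the cocycle condition for $f$ together with the homotopy identity already holding on every $e_j$ occurring in $d_Pe_i$, i.e.\ on exactly the values that may not yet be defined; the parity observation gives no substitute for this. (Note also that evenness of the generators is essential and must enter in a serious way: over an exterior algebra on an odd generator the analogous statement is false, the standard $2$-periodic acyclic free module being non-contractible.) So the final third of your proposal is an assertion rather than an argument; the bookkeeping it waves at is the actual content of Bernstein--Lunts' proof, and to make your route rigorous you would either have to impose a boundedness hypothesis (sufficient for this paper) or show that such a free dg-module is semi-free, i.e.\ exhibit an exhaustive filtration by dg-submodules with free subquotients having induced differential zero, which is not done here.
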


Let us now describe how to construct a $\mathcal{K}$-projective
resolution of a dg-module $\mc{M}$ when $\mc A$ is as in the proposition.
Assume first that $\mc{M}$ has zero differential. We may choose a free
resolution in the category of graded $\mc A$-modules:
\begin{displaymath}
P_{-n} \to \dots \to P_{-2} \to P_{-1} \to \mc{M}
\end{displaymath}
We then consider the direct sum $\mc{P} = \oplus_i P_i[i]$ as a dg-module with the
natural differential. For example, elements in $P_{-2}$ are mapped
under $d_P$ into $P_{-1}$ using the corresponding differential in the
above resolution. The natural morphism $\mc{P} \to
\mc{M}$ (again induced from the resolution above) is a quasi-isomorphism.

Using standard techniques from homological algebra, one may give a
more elaborate construction of a $\mc K$-projective resolution for
dg-modules over $\mc{A}$ with non-trivial differential \cite{BL}, but this will not be
necessary for our results.


We may now define the extension of scalars functor. Suppose we have a
morphism $\mc{A} \to \mc{A}^{\prime}$ of dg-algebras, and that $\mc{A}$ and
$\mc{A}^{\prime}$ are as in the proposition. For any $\mc{M} \in \dgmod
\mc{A}^{\prime}$ and $N \in \dgmod \mc{A}$ we define
\begin{displaymath}
\ext_\mc{A}^{\mc{A}^{\prime}}(N) = \mc{A}^{\prime} \stackrel{L}{\otimes}_\mc{A} N =  \mc{A}^{\prime} \otimes_{\mc{A}} P
\end{displaymath}
where $P$ is a $\mathcal{K}$-projective resolution of $P$. (Note that we may also take a $\mathcal{K}$-projective resolution of $\mc{A}^{\prime}$ as an $\mc{A}$ dg-module).

We can now return to a discussion of the equivariant derived
category. Abelian and triangulated categories can often by described by
``module categories'' over endomorphism rings of generators. We have
already seen that $D^b_G(pt)$ is precisely the triangulated subcategory
of $D^b(BG)$ generated by the constant sheaf. Bernstein and Lunts then
consider the functor $\Hom_{D(BG)} ( \Cs{BG}, -)$ and notice
that $\End(\Cs{BG})$ has the structure of a
dg-algebra. Moreover there is a quasi-isomorphism $\A_G=H^*(BG)
\to \End(\Cs{BG})$ of dg-algebras.

This yields a functor
\begin{equation*}
\Gamma_G=\Hom_{D^b(BG)} ( \Cs{BG}, -)\colon D^b_G(pt) \to \dgmod^f \A_G.
\end{equation*}
Bernstein and Lunts then show:

\begin{thm}[Main Theorem of Bernstein-Lunts \cite{BL}]\label{equiv-thm}  Assume as above that $G$ is a connected Lie group. 
The above functor gives an equivalence commuting with the cohomology functor:
\begin{equation*}
\Gamma_G : D_G^b(pt) \to \dgmod^f \A_G
\end{equation*}
Moreover if $\varphi : G \to H$ is an inclusion 
of connected Lie groups and $\A_H \to \A_G$ is the induced homomorphism the
 restriction and induction functors have an algebraic description in terms of dg-modules:
\begin{equation*}
\xymatrix{
D^b_G(pt) \ar[r]^{\ind_G^H} \ar[d]^{\Gamma_H} & D^b_H(pt) \ar[d]^{\Gamma_G} & &
D^b_H(pt) \ar[r]^{\res_{H}^G} \ar[d]^{\Gamma_H} & D^b_G(pt) \ar[d]^{\Gamma_G} \\
\dgmod^f {A}_G \ar[r]^{\text{res. of sc.}} & \dgmod^f \mathcal{A}_H & & 
\dgmod^f \mathcal{A}_H \ar[r]^{\ext^{\AG}_{\AH}}& \dgmod^f {A}_G
}
\end{equation*}
\end{thm}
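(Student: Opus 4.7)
The plan is to exploit the proposition just preceding the theorem, which identifies $D^b_G(pt)$ with the triangulated subcategory of $D^b(BG)$ generated by $\Cs{BG}$, and then to run a Morita-style argument: $\Cs{BG}$ is a compact generator, the functor $\Gamma_G = \Hom(\Cs{BG}, -)$ lands in dg-modules over its endomorphism dg-algebra, and a devissage identifies the essential image with $\dgmod^f \AG$.

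First I would fix a quasi-isomorphism $\AG = H^*(BG) \to \End^\bullet(\Cs{BG})$ of dg-algebras; this exists because $H^*(BG)$ is concentrated in even degrees for a connected Lie group $G$ (Borel's theorem, applied to a maximal compact), forcing the endomorphism dg-algebra to be formal. For any $\mc F \in D^b_G(pt)$, the complex $\Gamma_G(\mc F) = \Hom^\bullet(\Cs{BG}, \mc F)$ then inherits a dg-$\AG$-module structure, finitely generated because $\mc F$ lies in the triangulated hull of the compact generator.

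To prove $\Gamma_G$ is an equivalence I would verify: (i) $\Gamma_G(\Cs{BG}) = \AG$ as a free rank-one dg-module; (ii) $\Gamma_G$ is triangulated and commutes with shifts; (iii) it is fully faithful on $\Cs{BG}$ by (i) together with $\End_{\dgmod \AG}(\AG) = \AG$. A devissage over the triangulated subcategory generated by $\Cs{BG}$ propagates full faithfulness to all of $D^b_G(pt)$, and essential surjectivity follows because every finitely generated dg-$\AG$-module admits a $\mc K$-projective resolution built inductively from shifts of $\AG$ (in the style of the proposition of Bernstein-Lunts invoked above), and each such iterated cone lifts to an object of $D^b_G(pt)$ by the generation statement run in reverse. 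Compatibility with cohomology is automatic: $H^i(\Gamma_G(\mc F)) = \Hom_{D^b(BG)}(\Cs{BG}, \mc F[i])$, which is by definition the $i$-th $G$-equivariant hypercohomology of $\mc F$.

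For the induction/restriction squares, at the level of classifying spaces an inclusion $\vp : G \hookrightarrow H$ induces $B\vp : BG \to BH$, with $(B\vp)_* = \ind_G^H$ and $(B\vp)^* = \res_H^G$. For induction, the adjunction
\begin{equation*}
  \Hom_{BH}(\Cs{BH}, (B\vp)_*\mc F) = \Hom_{BG}((B\vp)^*\Cs{BH}, \mc F) = \Hom_{BG}(\Cs{BG}, \mc F)
\end{equation*}
identifies $\Gamma_H(\ind_G^H \mc F)$ with $\Gamma_G(\mc F)$ as vector spaces, and tracing the algebra actions shows the $\AH$-action on the left is the restriction of the $\AG$-action on the right along $\AH \to \AG$. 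For restriction one constructs a natural map
\begin{equation*}
  \AG \Lotimes_\AH \Gamma_H(\mc G) \longrightarrow \Gamma_G\bigl((B\vp)^* \mc G\bigr)
\end{equation*}
from the pullback $\Gamma_H(\mc G) \to \Gamma_G((B\vp)^*\mc G)$ together with the $\AG$-action on the target, and verifies it is a quasi-isomorphism by checking on $\mc G = \Cs{BH}$ (both sides equal $\AG$, since $(B\vp)^*\Cs{BH} = \Cs{BG}$) and extending by devissage. The main obstacle is precisely this restriction square: unlike for induction, where adjunction yields the identification for free, one must build the natural transformation at the dg-enhanced level before passing to the derived category, and then use that both $\Gamma_G \circ (B\vp)^*$ and $\AG \Lotimes_\AH \Gamma_H(-)$ are triangulated and agree on a compact generator --- a standard but somewhat delicate check of compatibility of $\mc K$-projective resolutions with pullback along $B\vp$.
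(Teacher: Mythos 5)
This theorem is not proved in the paper at all: it is imported verbatim as the Main Theorem of Bernstein--Lunts \cite{BL}, and the surrounding text merely assembles its ingredients (generation of $D^b_G(pt)$ by $\Cs{BG}$, the quasi-isomorphism $\AG \to \End(\Cs{BG})$, and $\mathcal{K}$-projective resolutions over polynomial dg-algebras). Your sketch is therefore not a different route from ``the paper's proof'' but a reconstruction of the Bernstein--Lunts argument itself, and as an outline it is essentially the right one: Morita-style argument with the compact generator $\Cs{BG}$, full faithfulness on the generator propagated by devissage, essential surjectivity via finite semifree (iterated-cone) resolutions over the polynomial ring $\AG$, the adjunction $(B\vp)^* \dashv (B\vp)_*$ for the induction square, and a comparison map $\AG \Lotimes_{\AH} \Gamma_H(\mc G) \to \Gamma_G(\res_H^G \mc G)$ checked on $\Cs{BH}$ and extended by devissage for the restriction square. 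Two caveats are worth recording. First, your justification of formality is too quick: evenness of $H^*(BG)$ alone does not force a dg-algebra to be formal; what is actually used is that $H^*(BG)$ is a \emph{free} graded-commutative (polynomial) algebra on even generators, which yields intrinsic formality by an obstruction-theory argument --- or one simply invokes the quasi-isomorphism $\AG \to \End(\Cs{BG})$ that the paper (following \cite{BL}) states just before the theorem. Second, as you yourself flag for the restriction square, every step of the argument ($\Gamma_G$ taking values in honest dg-modules, the identification $\Gamma(\ind_G^H \mc F) \cong \Gamma_G(\mc F)$ as a dg-module with $\AH$-action given by restriction of scalars, and the comparison map for $\res_H^G$) must be constructed at the level of a fixed dg-enhancement, i.e.\ with explicit $\mathcal{K}$-injective or $\mathcal{K}$-projective models, since in the bare triangulated category such squares commute only up to non-canonical isomorphism; this is precisely where the bulk of the work in \cite{BL} lies, and a complete write-up would have to carry it out rather than gesture at it. With those two points made precise, your outline is sound and matches the argument the paper outsources to Bernstein--Lunts.
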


\begin{remark} 
  Note that, if $G$ is a connected Lie group $\A_G$ is
  always a polynomial ring on even generators, and hence, by the
  previous discussion, we can always construct a sufficient supply of
  finitely generated $\mathcal{K}$-projective objects. 
\end{remark}

We will now describe equivariant intersection cohomology complexes,
which will be important in the sequel. Given a variety $X$ (for
simplicity assumed to be over the complex numbers), a smooth locally
closed subvariety $U$, and a local system $\mathcal{L}$ on $U$ there is a complex $\IC(U) \in D^b(X)$ called the intersection
cohomology complex extending $\mathcal{L}$, with remarkable properties
(see for example \cite{BBD} and \cite{GM1}).

It is possible to construct equivariant analogues of the intersection
cohomology complexes, as described in Chapter 5 of \cite{BL}: If $X$
is furthermore a $G$-variety for a complex algebraic group $G$, $U$ is
a smooth $G$-stable subvariety, and $\mathcal{L}$ is a $G$-equivariant local system on $U$ then there exists an ``equivariant intersection
cohomology complex'' which we will also denote $\IC(U, \mathcal{L})$. Forming intersection cohomology complexes behaves well
with respect to restriction, as the following lemma shows:

\begin{lem} \label{res-lemma} 
  If $H \hookrightarrow G$ is an inclusion
  of algebraic groups, $X$ is a $G$-variety, $U$ is a smooth $G$-stable subvariety then:
  \begin{displaymath}
    \res_G^H \IC(U, \mathcal{L}) \cong \IC(U, \res_G^H \mathcal{L})
  \end{displaymath} 
\end{lem}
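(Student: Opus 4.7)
The plan is to use the characterization of the (equivariant) IC sheaf by support and cosupport conditions on a Whitney stratification, combined with the fact, stated earlier in the excerpt, that the restriction functor $\res^G_H$ is realized geometrically as pullback along the smooth map $\up \colon X \times_H EG \to X \times_G EG$.

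First I would observe that, working as usual with finite-dimensional approximations to $EG$, the map $\up$ is a Zariski-locally trivial fiber bundle with fiber $G/H$, hence smooth of constant relative dimension $d = \dim G - \dim H$. Both $U \times_G EG \subset X \times_G EG$ and $U \times_H EG \subset X \times_H EG$ are smooth $G$-stable locally closed subvarieties, and the preimage of the former under $\up$ is exactly the latter.

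Next, I would invoke the standard fact that smooth pullback commutes with the formation of IC sheaves: if $f\colon Y\to Z$ is smooth of relative dimension $d$, $V\subset Z$ is smooth and locally closed, and $\mc M$ is a local system on $V$, then
\begin{equation*}
f^*\IC(V,\mc M)[d] \;\cong\; \IC(f^{-1}(V),\, f^*\mc M).
\end{equation*}
This is proved by checking that the right-hand side satisfies the support and cosupport conditions defining the IC complex on $Z$; smoothness of $f$ ensures that preimages of strata are strata of the correct codimension, so the stratification-theoretic conditions pull back cleanly. Applying this with $f=\up$ and $V = U\times_G EG$, and noting that on the open stratum the restriction of a $G$-equivariant local system is by construction $\res^G_H\mc L$, produces the desired isomorphism on each finite-dimensional approximation, and these are compatible as one enlarges the approximation. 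This yields the claimed isomorphism in $D^b_H(X)$.

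The main thing that needs care is the bookkeeping of degree shifts and the compatibility with the Bernstein-Lunts normalization of equivariant IC sheaves. The relative-dimension shift $[d]$ in the smooth base-change formula must be absorbed by the normalization convention used in \cite{BL} for passing between $X \times_H EG$ and $X$, so that one obtains an isomorphism on the nose rather than merely up to a shift. This compatibility reduces to the fact that the forgetful functor $D^b_H(X)\to D^b(X)$ is defined using the same smooth-pullback convention, so the shifts cancel uniformly; once this is verified, the remainder of the argument is just the stratification-theoretic characterization of IC.
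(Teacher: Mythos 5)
Your argument is sound, and it is worth noting that the paper itself offers no proof of this lemma at all: it is stated as a known fact, with only the following remark pointing to the Bernstein--Lunts device of finite-dimensional approximations $X\times_G E_nG$, which is exactly the setting your proof works in. Your route --- realize $\res_G^H$ as pullback along $\up\colon X\times_H E_nG\to X\times_G E_nG$, a smooth map with fiber $G/H$, and use that smooth pullback takes $\IC$ to $\IC$ up to the shift by the relative dimension, which is then absorbed by the normalization making the underlying nonequivariant complex of the equivariant $\IC$ equal to the ordinary $\IC(U,\mathcal{L})$ --- is the standard argument and the shifts do cancel as you claim: on the $n$-th approximation the normalized representative of the equivariant $\IC$ is $\IC(U\times_G E_nG,\cdot)[\dim G-\dim E_nG]$, and pulling back along $\up$ converts this into $\IC(U\times_H E_nG,\cdot)[\dim H-\dim E_nG]$, which is the $H$-equivariant normalization. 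Two small points: the map $\up$ need not be Zariski-locally trivial in general (that requires $H$ to be special), but it is locally trivial in the classical topology and in any case smooth, which is all your pullback argument uses; and an even shorter alternative is to avoid the shift bookkeeping entirely by using that the equivariant $\IC$ is characterized by support/cosupport conditions checked on the underlying nonequivariant complex together with the compatibility $\For_H\circ\res_G^H\cong\For_G$, so that $\res_G^H\IC(U,\mathcal{L})$ is an $H$-equivariant perverse extension of $\res_G^H\mathcal{L}$ satisfying the defining conditions and hence is $\IC(U,\res_G^H\mathcal{L})$ by uniqueness.
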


\begin{remark} In dealing with equivariant intersection cohomology complexes it is more convenient to work with an equivalent definition of the equivariant derived category as a limit of categories associated to $X \times_G E_n G$, where $E_nG$ is an finite dimensional algebraic variety, which approximates $EG$ (\cite{BL}). \end{remark}

\section{Hochschild homology and dg-algebras}
\label{sec:hochsch-homol-dg}

Recall that the Hochschild homology of an $\HT\otimes S$-module $R$ can be defined as
\begin{equation*}
  \HH_*(\S{})=\HT \Lotimes_{\HT\otimes \HT} \S{}
\end{equation*}
where $\HT$ has been made into an $\HT\otimes\HT$ algebra by the
multiplication map.  Since $\HT\cong\A_{B}$ and
$\HT\otimes\HT\cong\A_{\BB}$, this map is that induced by the diagonal
group homomorphism $B\hookrightarrow\BB$.  Thus, we expect that the
geometric analogue of taking Hochschild homology is restricting from a
$\BB$-action to the diagonal $B$.

However, we must be careful about the difference between dg-modules
and modules.  Hochschild homology is an operation on
$\HT-\HT$-bimodules, not dg-bimodules.  Thus, to make a precise
statement requires us to restrict to formal complexes.


\begin{defi} 
  Let $M \in \dgmod {\mc A_G}$ be a dg-module. If $M \cong H^*(M)$
  in $\dgmod{\mathcal{A}_G}$ we say that $M$ is {\bf formal}. Similarly,
  $\mathcal{F} \in D_G(pt)$ is {\bf formal} if $\Gamma_G(\mathcal{F})$
  is. 
\end{defi}

The following proposition connects the Hochschild cohomology of formal
equivariant sheaves with another equivariant cohomology. This is our
main technical tool.

\begin{prop} \label{formal-isom}
  Suppose $\mathcal{F} \in D_{B \times B}(pt)$ is formal. Then one has
  an isomorphism:
\begin{equation*}
\bigoplus_{i} \HH_i( \hc_{B \times B}(\mathcal{F}))[i] \cong \hc_B( \res_{B \times B}^B \mathcal{F})
\end{equation*}
as graded $S$-modules. 

Furthermore this isomorphism is functorial.  That is, if $\mc F$ and
$\mc G$ are formal sheaves in $ D_{B \times B}(pt)$, and $\vp:\mc
F\to\mc G$ is a morphism, the maps $\hc_{B}(\vp)$ and $\HH_*(\hc_{\BB}(\vp))$ commute with this isomorphism.
\end{prop}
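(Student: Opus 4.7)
The plan is to translate the geometric statement into a purely algebraic one via the Bernstein--Lunts equivalence (Theorem~\ref{equiv-thm}), then invoke formality, and finally recognise Hochschild homology as a derived tensor product along the multiplication map $S \otimes S \to S$.

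First, I apply the equivalence to the diagonal inclusion $B \hookrightarrow B \times B$. Using the K\"unneth identification $\A_{B \times B} \cong \A_B \otimes \A_B = S \otimes S$, the induced map of equivariant cohomology rings $\A_{B \times B} \to \A_B$ is precisely the multiplication map $S \otimes S \to S$. By the commuting squares in Theorem~\ref{equiv-thm}, the restriction of equivariance $\res^{B \times B}_B$ corresponds under $\Gamma_\bullet$ to extension of scalars along this map, so in $\dgmod^f \A_B$ one has
\begin{equation*}
\Gamma_B\bigl(\res^{B \times B}_B \mc F\bigr) \;\cong\; S \Lotimes_{S \otimes S} \Gamma_{B \times B}(\mc F).
\end{equation*}

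Second, I invoke formality. By hypothesis, $\Gamma_{B \times B}(\mc F)$ is isomorphic in $\dgmod \A_{B \times B}$ to its cohomology $\hc_{B \times B}(\mc F)$, viewed as a dg-module with zero differential. Since $\Lotimes$ is a derived functor, it is invariant under quasi-isomorphism in the second argument, and taking cohomology yields
\begin{equation*}
\hc_B\bigl(\res^{B \times B}_B \mc F\bigr) \;\cong\; H^*\bigl( S \Lotimes_{S \otimes S} \hc_{B \times B}(\mc F) \bigr).
\end{equation*}
To finish, I compute the right hand side using a $\mathcal{K}$-projective resolution of $S$ over $S \otimes S$, whose existence is guaranteed by Proposition~11.1.1 of~\cite{BL} (the Koszul complex being the standard explicit choice). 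By definition $\Tor_i^{S \otimes S}(S, \hc_{B \times B}(\mc F)) = \HH_i(\hc_{B \times B}(\mc F))$, and this $\Tor$ group contributes to complex degree $-i$ of the total complex; the shift $[i]$ in the statement is precisely the bookkeeping that merges this index with the internal polynomial grading of $\hc_{B \times B}(\mc F)$ into the single total grading on $\hc_B$.

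For functoriality, every operation above --- the equivalence $\Gamma_\bullet$, derived extension of scalars, and taking cohomology --- is a functor, so any morphism $\vp : \mc F \to \mc G$ of formal objects induces compatible maps on both sides. The main obstacle I foresee is tracking grading conventions precisely, in particular reconciling the $[i]$ shift with the homological-versus-cohomological indexing of $\Tor$. A secondary subtlety is that the isomorphism $\Gamma_{B \times B}(\mc F) \cong \hc_{B \times B}(\mc F)$ provided by formality is not canonical, so one must verify that the induced maps on cohomology really match $H^*\vp$; this follows from the fact that morphisms between formal dg-modules in the derived category are determined by their action on cohomology.
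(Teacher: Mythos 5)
Your argument is correct and follows essentially the same route as the paper: translate $\res_{B\times B}^{B}$ into derived extension of scalars along $S\otimes S\to S$ via the Bernstein--Lunts equivalence, use formality to replace $\Gamma_{B\times B}(\mc F)$ by its cohomology, and identify the resulting $\Tor$ groups with Hochschild homology; the only cosmetic difference is that you resolve $S$ over $S\otimes S$ by the Koszul complex, whereas the paper takes a free resolution of the bimodule $\hc_{B\times B}(\mc F)$ and reuses it as a $\mathcal{K}$-projective resolution of $\Gamma_{B\times B}(\mc F)$ --- both options are explicitly allowed by the paper's setup. One small caveat: your closing claim that morphisms between formal dg-modules in the derived category are determined by their action on cohomology is false in general (higher Ext terms can contribute degree-zero morphisms inducing zero on cohomology), but since the paper's own proof does not address the functoriality statement at all, this remark does not put your argument below the paper's level of rigor.
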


\begin{proof}[Proof of Proposition~\ref{formal-isom}] In order to work out the Hochschild homology of $\hc_{B \times B}(\mathcal{F})$ we may take a free resolution of $\hc_{B \times B}(\mathcal{F})$ by $S \otimes S$-modules:
\begin{equation*}
0 \to P_{-2n} \to \dots \to P_{-1} \to \hc_{B \times B}(\mathcal{F})
\end{equation*}
We then apply $S \otimes_{S \otimes S} -$ and take
cohomology. However, because $ \hc_{B \times B}(\mathcal{F}) \cong
\Gamma_{B \times B}(\mathcal{F})$ in $\dgmod\mathcal{A}_{B \times B}$, we
may also regard $\bigoplus P_i[-i]$ as a $\mathcal{K}$-projective
resolution of $\Gamma(\mathcal{F})$. By Theorem \ref{equiv-thm} we have:
\begin{equation*}
\hc_B(\res_{B \times B}^B \mathcal{F}) \cong H^*( S \overset{L}{\otimes}_{S \otimes S}  \hc_{B \times B}(\mathcal{F})) \cong \bigoplus \HH_i( \hc_{B \times B}(\mathcal{F}))[i]
\end{equation*}
\end{proof}

\begin{proof}[Proof of Theorem~\ref{sec:introduction-1}] 
  Let $\mathcal{F}$ denote the image of the intersection cohomology
  sheaf on $\overline{BwB}$ in $D_{B \times B}(pt)$. We will see in the next section
  that $\mathcal{F}$ is formal. Hence we can apply the above
  proposition. However, we also know that $\hc_{B \times
    B}(\mathcal{F})$ is the indecomposable Soergel bimodule $\S w$.
  Hence:
  \begin{equation*}
    \bigoplus_i \HH_i(\S w)[i] \cong \hc_B (\res_{B \times B}^B \mathcal{F}) 
  \end{equation*}

But $\res_{B \times B}^B$ commutes with the map to a point and $\res_{B \times B}^B(\IC(G_w)) \cong \IC(G_w)$ (Lemma \ref{res-lemma}). Hence:
\begin{displaymath}
\bigoplus_i \HH_i(\S w)[i] \cong \hc_B(\IC(G_w)) 
\end{displaymath}
  This then yields the main theorem.
\end{proof}

\section{The geometry of Bott-Samelson bimodules}
\label{sec:geom-soerg-bimod}

In this section, we discuss Bott-Samelson bimodules, calculate their
$B \times B$-equivariant cohomology and obtain the formality results
needed above.

Since we have already described the geometric realization of
indecomposable bimodules, as intersection cohomology of subvarieties
of $G$, we know abstractly that the Bott-Samelson bimodule $\S\Bi$
must be the hypercohomology of a perverse sheaf obtained by taking a
direct sum of $\IC{}$-sheaves of these subvarieties with appropriate
multiplicities.

However, this is deeply dissatisfying from a geometric viewpoint, and
totally at odds with our viewpoint that Bott-Samelson bimodules are
very natural objects. Thus we would like a more natural geometric
realization of them.

For each simple reflection $s$, let $P_s$ be the minimal parabolic containing $s$. For a sequence $\Bi = (s, t,  \dots, u)$ of simple reflections, let
\begin{equation*}
\BSi=P_s\times_{B} P_t \dots \cdots\times_{B}P_u.
\end{equation*}
We call this the {\bf Bott-Samelson variety} corresponding to $\Bi$.
Note that this variety still carries a $B\times B$-action, and thus a
diagonal $B$-action.

\excise{
The Bott-Samelson case is noticeably more complex than that of a smooth orbit closure, as is indicated by the appearance of combinatorics in describing the fixed point set of $\BSi$.



 \begin{prop}\label{BS-fixed-points}
   The subvariety $G_\Bi^T$ is the disjoint union of the left $T$-orbits
   \begin{equation*}
     F_{\boldsymbol{\ep}}=T_{\C}\cdot (s_{i_1}^{\ep_1},\ldots,s_{i_m}^{\ep_m})
   \end{equation*}
   for all sequences $\boldsymbol{\ep}=(\ep_1,\ldots,\ep_m)$ with $\ep_i\in\{0,1\}$ and $s_{i_1}^{\ep_1}\cdots s_{i_m}^{\ep_m}=e$.
   If we let $\tri(\Bi)$ be the number of such
   subsequences, then $\bet(G_\Bi^T)=2^n\tri(\Bi)$
 \end{prop}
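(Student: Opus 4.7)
The plan is to proceed by induction on $m$ using the fibration $\BSi = \BS{\Bi'}\times_B P_{s_m}$ with $\Bi' = (s_{i_1},\ldots,s_{i_{m-1}})$. For the induction to close, I would prove a strengthened statement: for any $\phi \in W$ viewed as an automorphism of $T$, the fixed locus on $\BSi$ of the $\phi$-twisted $T$-action $t\cdot[p_1,\ldots,p_m] := [tp_1,p_2,\ldots,p_m\phi(t)^{-1}]$ is $\bigsqcup_{\ep:\,w_\ep=\phi^{-1}}F_\ep$, where I abbreviate $w_\ep := s_{i_1}^{\ep_1}\cdots s_{i_m}^{\ep_m}\in W$. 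The original proposition is the $\phi = e$ case, and the Betti count will fall out from $F_\ep\cong T\cong(\C^\times)^n$ together with disjointness of the $F_\ep$.

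For the $\supseteq$ direction, the key observation is a pushing computation: moving $t$ from the leftmost slot of a word point across each factor conjugates it by $s_{i_k}^{\ep_k}$, so after all $m$ steps $t$ exits the rightmost slot as $w_\ep^{-1}tw_\ep$. Thus the word point is $\phi$-twisted fixed exactly when $w_\ep^{-1}tw_\ep = \phi(t)$ for all $t$, i.e.\ $w_\ep = \phi^{-1}$. The identical pushing computation shows the left $T$-stabilizer at any word point is trivial (so $F_\ep \cong T$), and that word points of distinct $\ep$ lie in distinct $\BB$-orbit strata of $\BSi$ (so different $F_\ep$ are disjoint); since $T$ is abelian the left $T$-action commutes with the twisted action, so fixedness of the word point propagates to all of $F_\ep$.

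The $\subseteq$ direction is the main content. Write a general point as $[x,p]$ with $x\in\BS{\Bi'}$ and $p\in P_{s_m}$; chasing the fixed-point condition through the Bott-Samelson equivalence requires a $t$-dependent $b\in B$ with $tx = xb$ and $b = p\phi(t)p^{-1}$. Demanding $b\in B$ for every $t\in T$ forces $pTp^{-1}\subset B$, so $p$ lies in $B\cdot N_G(T)$; intersecting with $P_{s_m}$ (whose parabolic Weyl group is $\{e,s_m\}$) leaves only the two cells $p\in B$ and $p\in B\dot s_m$. In the first I absorb $p$ into $x$ to obtain a representative $[x',1]$, on which the fixed-point condition becomes fixedness of $x'$ for the \emph{same} $\phi$-twisted action on $\BS{\Bi'}$; in the second I absorb the $B$-part of $p$ to get $[x',\dot s_m]$, and the condition becomes fixedness of $x'$ for the $\psi$-twisted action on $\BS{\Bi'}$ with $\psi := s_m\circ\phi$. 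By the inductive hypothesis these produce exactly the $\ep'$ with $w_{\ep'} = \phi^{-1}$ (yielding $\ep_m = 0$ and $w_\ep = \phi^{-1}$) and the $\ep'$ with $w_{\ep'} = (s_m\phi)^{-1} = \phi^{-1}s_m$ (yielding $\ep_m = 1$ and $w_\ep = w_{\ep'}s_m = \phi^{-1}$), closing the induction.

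The delicate step is the $\phi$ versus $\phi^{-1}$ bookkeeping: the strengthened statement must be phrased with $w_\ep = \phi^{-1}$ (not $w_\ep = \phi$) in order for the twist to propagate correctly through the $\ep_m = 1$ cell. This distinction is invisible at the base case $m = 1$ because simple reflections are involutions, so one really has to look at $m = 2$ to pin down the convention. Once the statement is fixed, specializing $\phi = e$ gives $G_\Bi^T = \bigsqcup_{\ep:\,w_\ep = e}F_\ep$, and summing the total Betti number $\bet(T) = 2^n$ over the $\tri(\Bi)$ admissible $\ep$'s yields $\bet(G_\Bi^T) = 2^n\tri(\Bi)$.
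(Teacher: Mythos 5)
Your proof is correct, but it takes a genuinely different route from the paper's. The paper's argument is essentially two lines: the multiplication map gives an identification $\BSi\cong \B_\Bi\times_{\B} G$, where $\B_\Bi$ is the projective Bott--Samelson variety and $\B=G/B$, under which the conjugation $T$-action becomes the left $T$-action on $\B_\Bi$ times the conjugation action on $G$; hence $\BSi^T\cong \B_\Bi^T\times_{\B}G^T$, and since $G^T=C_G(T)=T$ sits over the base point $eB\in\B$, the fixed locus is one copy of $T$ for each of the $2^m$ standard $T$-fixed points of $\B_\Bi$ whose image in $\B$ is $eB$, i.e.\ for each subsequence of $\Bi$ multiplying to $e$, and $\bet(\BSi^T)=2^n\tri(\Bi)$ follows at once. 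Your induction on $m$, strengthened to the $\phi$-twisted actions, replaces this global fibre-product picture by a slot-by-slot analysis: the pushing computation for the $\supseteq$ direction, the trivial-stabilizer and disjointness checks, and the case split $p\in B$ versus $p\in B\dot s_m$ (via $pTp^{-1}\subset B\Rightarrow p\in B\,N_G(T)$, intersected with $P_{s_m}$) are all sound, and your $w_\ep=\phi^{-1}$ bookkeeping does close the induction ($\psi=s_m\phi$ gives $w_{\ep'}=\phi^{-1}s_m$, hence $w_\ep=w_{\ep'}s_m=\phi^{-1}$). What each approach buys: yours is self-contained and elementary, in effect reproving the standard description of $\B_\Bi^T$ along the way, with the twisted statement recording over which fixed point of $\B_\Bi$ a given component lies; the paper's is shorter and makes structurally transparent why full tori $T\cong(\C^\times)^n$ appear, namely as the translates of $G^T=C_G(T)=T$ in the fibres of $\BSi\to\B_\Bi$. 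One phrase of yours to adjust: the slot-by-slot Bruhat types used for disjointness are not literally ``$\BB$-orbit strata'' (a fixed type stratum may contain several orbits), but the fact you actually use --- that the type of each slot is a well-defined invariant, constant on $F_{\boldsymbol\ep}$ and equal to $\boldsymbol\ep$ at the word point --- is true, so this is cosmetic rather than a gap.
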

 \begin{proof}
   We note that $G_\Bi\cong \mc B_{\Bi}\times_{\mc B} G$, so
   $G_\Bi^T\cong \mc B_{\Bi}^T\times_{\mc B} G^T$.  Thus, $G_\Bi^T$ is
   the disjoint union of one copy of the torus for each fixed point of
   $\mc B_\Bi$ whose image is the standard Borel.  These are in
   bijection with subsequences of $\Bi$ whose product in the Weyl
   group is the identity.  
 \end{proof} }

Furthermore, we have a projective $B\times
B$-equivariant map $m_\Bi:\BS{\Bi}\to G$ given by multiplication, intertwining the diagonal $B$-action on $G_\Bi$ with the conjugation $B$-action on $G$.  

The quotient of $\BS{\Bi}$ by the right Borel action is the familiar
projective Bott-Samelson variety which is used to construct
resolutions of singularities for Schubert varieties.  It is worth
noting that just like in the flag variety case, if $\Bi$ is a reduced
expression (i.e. if $\ell(st\dots u)$ is the length of $\Bi$), then the multiplication map is a resolution of singularities.

Let us explain how to calculate the $B \times B$-equivariant
cohomology of the Bott-Samelson varieties. Actually, we will calculate
the corresponding dg-module over $S \otimes S$. We start with a lemma:

\begin{lem} Suppose $\mathcal{F} \in D_{\BB}(pt)$ and let $s$ be a simple reflection. Then:
\begin{displaymath}
  \Gamma_{\BB}( \res_{P_s \times B}^{\BB} \ind_{\BB}^{P_s \times B} \mathcal{F}) = \HT\otimes_{\HT^s} \Gamma_{\BB}(\mathcal{F})
\end{displaymath}
\end{lem}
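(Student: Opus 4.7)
The plan is to transport the statement to the algebraic side via Bernstein--Lunts (Theorem~\ref{equiv-thm}) and recognize the composition $\res\circ\ind$ as a familiar tensor product over the subring $S^s\otimes S\subset S\otimes S$.

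First I would pin down the dg-algebras involved: under the identifications $\A_{\BB}=S\otimes S$ and $\A_{P_s\times B}=S^s\otimes S$ (the latter because $P_s$ retracts onto its Levi, whose relevant Weyl group is $\{1,s\}$, so $H^*(BP_s)\cong S^s$), the inclusion $\BB\hookrightarrow P_s\times B$ induces the evident ring inclusion $S^s\otimes S\hookrightarrow S\otimes S$. Setting $N=\Gamma_{\BB}(\mc F)$, Theorem~\ref{equiv-thm} then translates $\ind_{\BB}^{P_s\times B}$ into restriction of scalars from $S\otimes S$ to $S^s\otimes S$, and $\res_{P_s\times B}^{\BB}$ into derived extension of scalars along the same inclusion. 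The composition becomes
\begin{equation*}
\Gamma_{\BB}\bigl(\res_{P_s\times B}^{\BB}\,\ind_{\BB}^{P_s\times B}\mc F\bigr)\;\cong\;(S\otimes S)\Lotimes_{S^s\otimes S} N.
\end{equation*}

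To replace the derived tensor by an ordinary one, I would invoke the classical fact that $S$ is free of rank two over $S^s$ (spanned, for instance, by $1$ and the simple root $\alpha_s$). Consequently $S\otimes S$ is free, hence $\mathcal{K}$-projective, over $S^s\otimes S$, and the derived tensor product collapses to the ordinary one. A direct simplification
\begin{equation*}
(S\otimes S)\otimes_{S^s\otimes S} N\;\cong\;S\otimes_{S^s}\!\bigl(S\otimes_{S} N\bigr)\;\cong\;S\otimes_{S^s}N
\end{equation*}
then uses only the right $S$-action on $N$ to cancel the middle factor, and the residual $S\otimes S$-module structure is given by the outer $S$ on the left together with the right $S$-action carried by $N$, exactly as in the statement.

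The main obstacle is purely bookkeeping: verifying the identification of $\A_{P_s\times B}$ with $S^s\otimes S$ and the compatibility of the ring inclusion with the group inclusion, and keeping track of which of the three copies of $S$ acts in which way in $(S\otimes S)\otimes_{S^s\otimes S}N$. No hard new input is required beyond the freeness of $S$ over $S^s$ and the Bernstein--Lunts equivalence invoked above.
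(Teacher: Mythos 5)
Your argument is correct and follows essentially the same route as the paper: apply the Bernstein--Lunts equivalence so that $\ind_{\BB}^{P_s\times B}$ becomes restriction of scalars to $S^s\otimes S$ and $\res_{P_s\times B}^{\BB}$ becomes derived extension of scalars, then use freeness of $S\otimes S$ over $S^s\otimes S$ to replace $\Lotimes$ by the ordinary tensor product and simplify to $S\otimes_{S^s}\Gamma_{\BB}(\mc F)$. The only difference is that you spell out the identification $\A_{P_s\times B}\cong S^s\otimes S$ and the final cancellation explicitly, which the paper leaves implicit.
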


\begin{proof} Thanks to Theorem \ref{equiv-thm} we know that
  $\Gamma_{P_s \times B} (\ind_{\BB}^{P_s \times B} \mathcal{F})$ is
  equal to $\Gamma_{\BB}(\mathcal{F})$ regarded as an $\HT^s \otimes
  \HT$-module. Hence 
  \begin{equation*}
    \Gamma_{\BB}( \res_{P_s \times B}^{\BB}
  \ind_{\BB}^{P_s \times B} \mathcal{F}) = (\HT \otimes \HT)
  \Lotimes_{\HT^s \otimes \HT} \Gamma_{\BB}(\mathcal{F}).
    \end{equation*}
    However, $S \otimes S$ is free as a module over $S^s\otimes S$ and
    is hence $\mathcal{K}$-projective. Thus the derived tensor product
    coincides with the naive tensor product and the result follows.
\end{proof}

We can now prove the crucial ``formality'' claim mentioned above:

\begin{prop} 
  The direct images of the sheaves $\Cs{\BS{\Bi}}$ and $\IC(\Gw)$ in $D_{B \times
    B}(pt)$ are formal.
\end{prop}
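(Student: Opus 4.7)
The plan is to treat the two sheaves separately: first handle the Bott-Samelson constant sheaves by induction on the length of $\Bi$, and then deduce formality of $\IC(\Gw)$ from that via the decomposition theorem.

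For the Bott-Samelson part, I would induct on the length $m$ of $\Bi = (s_1,\ldots,s_m)$. The base case $m=1$ is $\BS{(s_1)}=P_{s_1}$, whose $\BB$-equivariant cohomology $S\otimes_{S^{s_1}}S$ is a graded $S\otimes S$-module with zero differential and hence trivially formal. For the inductive step, write $\Bi=(s_1,\Bi'')$ and use the isomorphism of $\BB$-varieties $\BS{\Bi}\cong P_{s_1}\times_B \BS{\Bi''}$ (with $B$ acting on $\BS{\Bi''}$ by left multiplication on the first factor) together with a base-change argument to identify, in $D^b_{\BB}(pt)$, the pushforward of $\Cs{\BS{\Bi}}$ to a point with the image of the pushforward of $\Cs{\BS{\Bi''}}$ under the composed functor $\res^{\BB}_{P_{s_1}\times B}\ind_{\BB}^{P_{s_1}\times B}$. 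The lemma just proved then yields
\begin{equation*}
\Gamma_{\BB}(\Cs{\BS{\Bi}})\;\cong\;S\otimes_{S^{s_1}}\Gamma_{\BB}(\Cs{\BS{\Bi''}}).
\end{equation*}
By induction the right-hand factor is a graded module with zero differential, and since $S$ is free of rank two over $S^{s_1}$ no derived correction is needed, so the result is again a graded module with zero differential, hence formal.

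For $\IC(\Gw)$ I would pick a reduced expression $\Bi$ for $w$, so that the multiplication map $m_{\Bi}:\BS{\Bi}\to\Gw$ is a $\BB$-equivariant resolution of singularities. The equivariant version of the decomposition theorem of Beilinson-Bernstein-Deligne-Gabber, as developed in \cite{BL}, produces an isomorphism in $D^b_{\BB}(G)$ of the form
\begin{equation*}
(m_{\Bi})_*\Cs{\BS{\Bi}}[\ell(w)]\;\cong\;\IC(\Gw)\;\oplus\;\bigoplus_j\IC(\overline{Bv_jB})[k_j]
\end{equation*}
for certain $v_j<w$ in $W$ and integer shifts $k_j$. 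Pushing forward to the point exhibits $\Gamma_{\BB}(\IC(\Gw))$ as a direct summand of $\Gamma_{\BB}(\Cs{\BS{\Bi}})$ in $\dgmod^f \mathcal{A}_{\BB}$, and since composing the quasi-isomorphism $\mathcal F\cong H^*(\mathcal F)$ with the summand projector shows that any summand of a formal dg-module is itself formal, we conclude that $\IC(\Gw)$ is formal as well.

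The main obstacle will be the geometric identification in the inductive step: one must verify carefully that forming the twisted product $P_{s_1}\times_B(-)$ corresponds on the level of $\BB$-equivariant constant-sheaf pushforwards precisely to the abstract functor $\res^{\BB}_{P_{s_1}\times B}\ind_{\BB}^{P_{s_1}\times B}$ of the lemma. This is essentially a base-change computation, matching the proper-base-change square built from the quotient $P_{s_1}\times \BS{\Bi''}\to P_{s_1}\times_B\BS{\Bi''}$ against the adjunction defining $\ind$ and $\res$. Once this bookkeeping is in place, the lemma and the equivariant decomposition theorem do all the remaining work.
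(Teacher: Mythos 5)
Your proposal is correct and follows essentially the same route as the paper: computing $\Gamma_{\BB}$ of the Bott-Samelson pushforward via the induction--restriction lemma (your induction on the length of $\Bi$ is just the iterated application of that lemma written in the paper), which yields the module $S\otimes_{S^{s}}S\otimes_{S^{t}}\cdots\otimes_{S^{u}}S$ with zero differential, and then deducing formality of $\IC(\Gw)$ as a direct summand of $(m_{\Bi})_*\Cs{\BS{\Bi}}$ via the equivariant decomposition theorem for a reduced expression.
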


\begin{proof}
First notice that we
can can write the sheaf  $(m_{\Bi})_*\Cs{\BS{\Bi}}$ as a iterated induction and
restriction:
\begin{equation*}
(m_{\Bi})_*\Cs{\BS{\Bi}}\cong \res_{P_{s} \times B}^{B \times B}\ind_{B \times B}^{P_{s} \times B} \dots 
\res_{P_{t} \times B}^{B \times B}\ind_{B \times B}^{P_{t} \times B}(\Cs{B})
\end{equation*}
Hence by the above lemma, letting $p$ be the projection to a point:
\begin{equation*}
\Gamma_{B \times B} (p_*\Cs{\BS{\Bi}}) = S\otimes_{S^{s}} S \otimes_{S^{t}} \otimes \dots \otimes_{S^{u}} S \qquad \text{in $D_{\A_{\BB}}$}
\end{equation*}
Thus the proposition is true for $\Cs{\BS{\Bi}}$. Now, by the decomposition theorem of \cite{BBD} or more precisely, its equivariant version in \cite{BL},
  we may obtain $\IC(\Gw)$ as a direct summand of $(m_{\Bi})_*\BS{\Bi}$, if $\Bi  = (s, \dots, s)$ is a reduced expression for $w$. Thus $p_*\IC(\Gw)$ is a direct summand of $p_*\Cs{\BS{\Bi}}$ and is also formal.
\end{proof}

\section{Equivariant formality}
\label{sec:equiv-form}

Now, we will carry out some actual computations of $B$-equivariant
cohomology, and thus of Hochschild homology.


Of course, the best setting in which to compute equivariant cohomology
of a variety is when that variety (or more precisely, the sheaf one
intends to compute the hypercohomology of) is equivariantly formal.

\begin{deth}\label{eq-for-def}
  We call $\mc F\in D^b_T(X)$ on a $T$-variety $X$ equivariantly formal if one of the following equivalent conditions holds:
  \begin{enumerate}
  \item The $S$ module $\hc_T(X,\mc F)$ is free.
  \item The differentials in the spectral sequence
    \begin{equation*}
      \hc(X,\mc F)\otimes\HT\Rightarrow\hc_{T}(X,\mc F)
    \end{equation*}
    are trivial, that is, if $\hc(X,\mc F)\otimes\HT\cong\hc_{T}(X,\mc
    F)$ as $\HT$-modules.
  \item We have the equality $\dim_\C H^*(X)= \dim_{\C}H^*(X^T)$.
  \end{enumerate}
\end{deth}

The equivariant formality of the Bott-Samelsons of $\SL n$ has been
proven by Rasmussen in different language.  

\begin{prop}
  If $G=\SL n$ or $\GL n$, the $T$-space $\BSi$ is equivariantly formal for all $\Bi$.
\end{prop}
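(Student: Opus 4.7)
The plan is to verify criterion~(3) of Definition/Theorem~\ref{eq-for-def}: that $\dim_\C H^*(\BSi) = \dim_\C H^*(\BSi^T)$ under the conjugation $T$-action.

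First I would identify the $T$-fixed locus. A direct analysis of the balanced product $\BSi = P_{s_{i_1}}\times_B\cdots\times_B P_{s_{i_m}}$ (tracking when a representative $(p_1,\dots,p_m)$ is conjugation-fixed up to the $B$-equivalence) shows
\[
\BSi^T \;=\; \bigsqcup_{\boldsymbol{\ep}} F_{\boldsymbol{\ep}}, \qquad F_{\boldsymbol{\ep}} \cong T,
\]
where the disjoint union runs over subsequences $\boldsymbol{\ep} = (\ep_1,\dots,\ep_m) \in \{0,1\}^m$ with $s_{i_1}^{\ep_1}\cdots s_{i_m}^{\ep_m} = e$ in $W$. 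Writing $\tau(\Bi)$ for the number of such subsequences, this gives $\dim H^*(\BSi^T) = 2^n \tau(\Bi)$.

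Next I would compute $\dim H^*(\BSi)$ inductively on $m$ using the bundle structure $\BSi \to P_{s_{i_1}}/B = \mathbb{P}^1$ with fiber $\BS{\Bi'}$, where $\Bi' = (s_{i_2},\dots,s_{i_m})$. The Serre spectral sequence gives $\dim H^*(\BSi) \le 2 \dim H^*(\BS{\Bi'})$, with equality when the sequence degenerates. Assuming inductively that $\dim H^*(\BS{\Bi'}) = 2^n\tau(\Bi')$, and using the combinatorial recursion $\tau(\Bi) = \tau_e(\Bi') + \tau_{s_{i_1}}(\Bi')$ where $\tau_w(\Bi')$ counts subsequences of $\Bi'$ whose product in $W$ is $w$, one checks that the Serre differentials must precisely kill the discrepancy $2\tau(\Bi') - \tau(\Bi) = \tau_e(\Bi') - \tau_{s_{i_1}}(\Bi')$ worth of classes (times $2^n$). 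Combining with the Borel-localization lower bound $\dim H^*(\BSi) \ge \dim H^*(\BSi^T) = 2^n\tau(\Bi)$ forces equality, giving criterion~(3) and hence equivariant formality.

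The main obstacle is controlling the Serre differentials precisely enough to see that they match the combinatorial discrepancy; this is the type-$A$-specific step and is not true for general Coxeter types, where the identity between these counts fails and equivariant formality itself can fail. In practice I would cite Rasmussen \cite{Ras06}, who establishes the equivalent computation in the language of HOMFLY/Khovanov--Rozansky homology for $\SL n$: there the combinatorial identity appears as the statement that certain Koszul differentials on $\S\Bi$ vanish. Translating back via Theorem~\ref{sec:introduction-1} and Theorem~\ref{BS-match} converts freeness of $\HH_*(\S\Bi)$ over $S$ into equivariant formality of $\BSi$, completing the argument.
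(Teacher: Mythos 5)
The argument you actually complete is the paper's own proof: identify $\HH_*(\S{\Bi})$ with the $B$- (equivalently $T$-) equivariant cohomology of $\BSi$, quote Rasmussen's result \cite{Ras06} that $\HH_*(\S{\Bi})$ is a free $S$-module in type $A$, and conclude equivariant formality. Note that this conclusion goes through criterion (1) of Definition/Theorem~\ref{eq-for-def} (freeness of $\hc_T$), not the dimension-count criterion (3) you announce at the outset; that is harmless since the criteria are equivalent. One caution about the translation step: equivariant formality of $\BSi$ is itself part of Theorem~\ref{BS-match}, so to avoid circularity you should invoke only the isomorphism $\HH_*(\S{\Bi})\cong H^*_B(\BSi)$, which rests on Proposition~\ref{formal-isom} together with the formality of $(m_\Bi)_*\Cs{\BSi}$ proved in Section~\ref{sec:geom-soerg-bimod}, and not the full statement of Theorem~\ref{BS-match}.

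The direct route you sketch first is not a proof as it stands, and the paper does not attempt it. The fixed-point description (copies of $T$ indexed by subsequences of $\Bi$ multiplying to $e$ in $W$) is correct, but the spectral sequence induction breaks exactly where you flag it: the Serre upper bound $\dim H^*(\BSi)\le 2\dim H^*(\BS{\Bi'})=2^{n+1}\tau_e(\Bi')$ and the localization lower bound $2^{n}\bigl(\tau_e(\Bi')+\tau_{s_{i_1}}(\Bi')\bigr)$ do not coincide whenever $\tau_e(\Bi')\neq\tau_{s_{i_1}}(\Bi')$, so the sandwich does not ``force equality''; one would have to prove that the Serre differentials kill precisely the discrepancy, and no mechanism for this is offered. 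The differentials are genuinely nontrivial: already for $\Bi=(s)$ in $\SL 2$ the bundle $B\to P_s\to\mathbb{P}^1$ is, up to homotopy, the Hopf fibration and $d_2$ is an isomorphism on the relevant term. Since you ultimately close this gap only by falling back on Rasmussen plus the geometric identification, the net content of your proposal coincides with the paper's proof, with the spectral-sequence discussion as incomplete (and dispensable) scaffolding.
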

\begin{proof}
  By Theorem \ref{sec:introduction-1}, $\HH_*(\S{\Bi})$ is free as an
  $S$-module if and only if $H^*_T(\BS{\Bi})$ is.  By \cite[Propositon
  4.6]{Ras06}, the module $\HH_*(\S{\Bi})$ is free in type $A$, so by
  Definition/Theorem~\ref{eq-for-def} above $\BSi$ is equivariantly formal.
\end{proof}
This in turn implies that $m_*\Cs{\BSi}$ is equivariantly formal,
where $m:\BSi\to G$ is the multiplication map.  Since all summands of
equivariant formal sheaves are themselves equivariantly formal, and
each $\IC(\Gw)$ appears as a summand of such a sheaf (if, for example,
$\Bi$ is a reduced word for $w$), this completes the proof of
Theorem~\ref{intro-e-f} and the first part of Theorem~\ref{BS-match}.

While the most obvious consequence of equivariant formality,
calculating the equivariant cohomology from ordinary or vice versa, is
a rather useful one, it has less obvious ones as well.

\begin{prop}{\em (Goresky, Kottwitz, MacPherson \cite[Theorem 6.3]{GKM})}
  If $\mc F$ is equivariantly formal, then the pullback map
  \begin{equation*}
    i^*_T:H_T^*(X,\mc F)\to H_T^*(X^T,\mc F)
  \end{equation*}
  is injective.
\end{prop}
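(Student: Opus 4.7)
The plan is to combine the equivariant formality hypothesis with the Borel--Atiyah--Quillen localization theorem. Write $K$ for the fraction field of $\HT = H^*_T(pt)$. The localization theorem asserts that, for any reasonable $T$-space $X$ and any $\mc F \in D^b_T(X)$, the pullback
\begin{equation*}
i^*_T : H_T^*(X,\mc F) \to H_T^*(X^T,\mc F)
\end{equation*}
has kernel and cokernel that are $\HT$-torsion (in fact killed by a product of nonzero characters), and hence becomes an isomorphism after tensoring with $K$. Granting this, the proposition follows in one line: equivariant formality -- condition~(1) of Definition/Theorem~\ref{eq-for-def} -- says that $H^*_T(X,\mc F)$ is free as an $\HT$-module, hence torsion-free, so the natural localization map $H_T^*(X,\mc F) \to H_T^*(X,\mc F) \otimes_\HT K$ is injective. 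This map factors as
\begin{equation*}
H_T^*(X,\mc F) \xrightarrow{i^*_T} H_T^*(X^T,\mc F) \to H_T^*(X^T,\mc F) \otimes_\HT K,
\end{equation*}
so $i^*_T$ must itself be injective.

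The whole content of the argument is thus concentrated in the localization theorem. I would sketch this by the standard devissage. Consider the long exact sequence of equivariant hypercohomology for the pair $(X, X \setminus X^T)$; it is enough to show that $H_T^*(X \setminus X^T, \mc F|_{X \setminus X^T})$ is $\HT$-torsion. Stratify $X \setminus X^T$ into finitely many $T$-stable locally closed pieces $Y_\alpha$ such that on $Y_\alpha$ the $T$-action factors through $T/T'_\alpha$ for some proper subtorus $T'_\alpha \subsetneq T$. For any character $\chi_\alpha \in \HT$ that vanishes on $T'_\alpha$, the module $H_T^*(Y_\alpha, \mc F|_{Y_\alpha})$ -- which factors through $H^*_{T/T'_\alpha}(pt)$ -- is annihilated by $\chi_\alpha$. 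Piecing this together with successive long exact sequences for the stratification shows that a suitable product $\prod_\alpha \chi_\alpha$ kills $H_T^*(X \setminus X^T, \mc F|_{X \setminus X^T})$, as required.

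The main obstacle is precisely the devissage step, but this is standard and is carried out in Theorem~6.3 of \cite{GKM} and in Chapter~13 of \cite{BL}; the authors need only cite these. Note that nothing in the argument uses the specific geometry of $\Gw$ or the Bott--Samelson varieties, so the statement applies to any equivariantly formal $\mc F$ on any $T$-variety, which is exactly the generality needed to feed into Theorem~\ref{intro-e-f} and the subsequent computations.
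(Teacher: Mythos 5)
The paper offers no argument for this proposition at all---it is quoted directly from Goresky--Kottwitz--MacPherson \cite[Theorem 6.3]{GKM}---so the only question is whether your argument is sound. Your top-level reduction is the standard one and is correct: equivariant formality in the sense of condition (1) of Definition/Theorem~\ref{eq-for-def} gives that $H^*_T(X,\mc F)$ is free, hence torsion-free, over $\HT$; the localization theorem gives that $i^*_T$ becomes an isomorphism after tensoring with the fraction field of $\HT$; and since the natural map $H^*_T(X,\mc F)\to H^*_T(X,\mc F)\otimes_{\HT}\mathrm{Frac}(\HT)$ is injective and factors through $i^*_T$, the map $i^*_T$ is injective. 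Deferring the localization theorem itself to \cite{GKM} and \cite{BL} is exactly what the paper does, so up to that citation your proof is complete.

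However, the devissage you sketch for the localization theorem is garbled, and as written its key step fails. The hypothesis that ``the $T$-action on $Y_\alpha$ factors through $T/T'_\alpha$ for some proper subtorus $T'_\alpha$'' is essentially vacuous (it always holds with $T'_\alpha=\{1\}$), and the asserted annihilation is then false: torsion-ness of $H^*_T(Y_\alpha,\mc F)$ is governed by the stabilizers of points of $Y_\alpha$, not by the kernel of the action. For instance, let $T=(\C^*)^2$ act coordinatewise on $\C^2\setminus\{0\}$: there are no fixed points and the action factors through no proper quotient, yet $H^*_T(\C^2\setminus\{0\})\cong\C[x,y]/(xy)$ is not annihilated by any single character, only by the product $xy$. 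The correct condition is in the opposite direction: stratify $X\setminus X^T$ so that on $Y_\alpha$ the identity components of the point stabilizers all lie in a fixed proper subtorus $H_\alpha$; then $H^*_T(Y_\alpha,\mc F|_{Y_\alpha})$ is supported on $\mathfrak{h}_\alpha\subset\mathfrak{t}=\operatorname{Spec}\HT$, so a character $\chi_\alpha$ vanishing on $H_\alpha$ acts nilpotently (not necessarily by zero---already for a free action one only gets nilpotence, via Chern classes on the finite-dimensional quotient), and a product of powers of the $\chi_\alpha$ kills $H^*_T(X\setminus X^T,\mc F)$. Since you explicitly lean on \cite{GKM} and \cite{BL} for this step, your proposition stands, but the sketch should be corrected or simply dropped in favor of the citation.
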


As we mentioned earlier, we are interested in the Hochschild homology
of Soergel bimodules as a bigraded object (so that we get a
triply-graded knot homology theory), but the grading on equivariant
hypercohomology is only one of these.  From now on, we consider
$H^*_T(\Gw)$ as a bigraded $\HT$-module, with the bigrading defined by
the isomorphism with Hochschild homology given by Proposition
\ref{formal-isom}.

\begin{proof}[Proof of Theorems \ref{grading-match} and \ref{BS-match}]
  Since the pullback map $H_T^*(\BSi)\to H_T^*(\BSi^T)$ is induced by
  a map of Soergel bimodules, it is homogeneous in both gradings.
  Similarly with the map induced by the inclusion of a summand
  $\IC(\Gw)\hookrightarrow m_*\Cs{\BSi}$.  Thus we need only establish
  the theorem for $\BSi^T$.  As this is a union of complex tori with
  the trivial action, we need
  only establish the theorem for $T$.

  This case follows immediately from applying $\HH_0$ to the Koszul
  resolution of $H_{T\times T}(T)\cong S$ as a bimodule over itself.
\end{proof}

Let us turn to the case where $\Gw$ is smooth. Since $H^*_T(\Gw)\cong S\otimes H^*(\Gw)$, it will prove very interesting to understand $H^*(\Gw)$.  Surprisingly, no description of this cohomology seems to be in the literature, but in fact there is a very beautiful one.

As is well known (and we reprove in the course of
Lemma~\ref{schub-lemma} below), there exists a unique decreasing
sequence of positive integers $k_1,\cdots k_{n}$ such that the Hilbert series
of $H^*(\Gw/B)$ is of the form
\begin{equation*}
  \sum_{i=1}^{\ell(w)}q^{i/2}\dim H^i(\Gw/B)=\prod_{j=1}^n\frac{1-q^{k_j}}{1-q}
\end{equation*}

\begin{thm}\label{equiv-form}
  If $\Gw$ is smooth, 
  then as an algebra,
  \begin{equation}\label{eq:1}
    H^*(\Gw)\cong \land^{\bullet}(\ga_1,\ldots \ga_n)
  \end{equation}
  where $\deg(\ga_i)=2k_i-1$, and as an
  $\HT$-algebra,
  \begin{equation*}
    H^*_T(\Gw)\cong\HH_i(\S w)\cong \HT\otimes H^*(\Gw).
  \end{equation*}
  In the standard double grading on $\HH_i(\S w)$, we have $\deg(1\otimes \ga_i)=(1,2k_i)$.
\end{thm}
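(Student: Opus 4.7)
The plan is to exploit the principal $B$-bundle structure $\Gw \to \B_w = \Gw/B$ (the right $B$-action on $\Gw$ being free), which up to homotopy is a principal $T$-bundle with fibre $T$, and then to compute $H^*(\Gw)$ via the resulting Serre spectral sequence, which I recognise as a Koszul complex. The key algebraic input is the content of Lemma~\ref{schub-lemma}: for smooth $\B_w$, the characteristic map $\HT \twoheadrightarrow H^*(\B_w)$ is surjective with kernel generated by a regular sequence $f_1, \ldots, f_n$ whose polynomial degrees are the $k_j$'s appearing in the Hilbert series.

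The Serre spectral sequence for the fibration $T \to \Gw \to \B_w$ has $E_2 = H^*(\B_w) \otimes \land^\bullet(\xi_1, \ldots, \xi_n)$ with each $\xi_i$ in degree $1$ and transgression $d_2(\xi_i) = x_i$, the image in $H^2(\B_w)$ of the $i$-th generator of $\HT$. This is precisely the Koszul complex computing $\mathrm{Tor}^\HT_*(\C, H^*(\B_w))$. Using the Koszul resolution of $H^*(\B_w) = \HT/(f_1, \ldots, f_n)$ on the other side (valid because the $f_j$ form a regular sequence), I read off
\[
\mathrm{Tor}^\HT_*(\C, H^*(\B_w)) \cong \land^\bullet(\eta_1, \ldots, \eta_n)
\]
with $\eta_j$ of homological degree $1$ and internal degree $2k_j$, giving $H^*(\Gw) \cong \land^\bullet(\gamma_1, \ldots, \gamma_n)$ with $\deg \gamma_j = 2k_j - 1$. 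The spectral sequence collapses at $E_3$ because $H^*(T)$ is generated in exterior-degree $1$. The dimension count $\dim H^*(\Gw) = 2^n = \dim H^*(T) = \dim H^*(\Gw^T)$ (using $\Gw^T = T$, since the conjugation centraliser of $T$ in $G$ is $T$) then yields equivariant formality via Definition/Theorem~\ref{eq-for-def}, hence $H^*_T(\Gw) \cong \HT \otimes H^*(\Gw)$; together with Theorem~\ref{sec:introduction-1} (using $\IC(\Gw) = \Cs{\Gw}$ for smooth $\Gw$), this gives $\HH_*(\S w) \cong \HT \otimes H^*(\Gw)$.

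For the bigrading, I pass to the Hochschild Koszul description $\HH_*(\S w) = H^*(\S w \otimes \land^\bullet(\xi_i), \delta)$, where $\delta$ is induced by $\xi_i \mapsto x_i \otimes 1 - 1 \otimes x_i$, so that $\xi_i$ itself carries bigrade $(\deg_h, \deg_p) = (1, 2)$. A cycle representing $\gamma_j$ is then a $\xi$-linear combination whose $\S w$-coefficients have polynomial degree $2k_j - 2$, yielding $\deg_h(\gamma_j) = 1$ and $\deg_p(\gamma_j) = 2k_j$ as required; this matches Theorem~\ref{grading-match}, with $\deg_t = \deg_h = 1$ and $\deg_e = \deg_p - 2\deg_h = 2k_j - 2$.

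The hard part is Lemma~\ref{schub-lemma}: showing that smoothness of $\B_w$ implies $H^*(\B_w)$ is a graded complete intersection with precisely the degree sequence $k_1, \ldots, k_n$. In type A I would expect an induction on $\ell(w)$ using the iterated Grassmannian-bundle structure of smooth Schubert varieties, with the $k_j$'s emerging as ranks of successive fibres; in arbitrary type, this rests on a more delicate analysis of the action of $w$ on the root system.
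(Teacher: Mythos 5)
Your argument is correct and is essentially the paper's: the paper applies the Hirsch lemma of Deligne--Griffiths--Morgan--Sullivan to the fibration $B \to \Gw \to \Gw/B$ to identify $H^*(\Gw)$ with the cohomology of the Koszul complex $H^*(\Gw/B)\otimes_{\HT}\EuScript{K}_T$ --- exactly the $\Tor^{\HT}_*(\C,H^*(\Gw/B))$ computation you carry out via the Serre spectral sequence --- and it likewise rests on Lemma~\ref{schub-lemma}, the $2^n$-dimensional count against $\Gw^T=T$ for equivariant formality, and the Koszul description of Hochschild homology to place the generators in bidegree $(1,2k_i)$. The only real difference is that the paper's proof of the theorem separately handles $w$ lying in a proper parabolic (writing $G_w\cong G'_w\times T/T'\times\bigl(N\cap w_0'Nw_0'\bigr)$ and extending the $k_i$ by $1$'s), because its proof of Lemma~\ref{schub-lemma} assumes $\Gw$ is not contained in a parabolic --- a caveat that your use of the lemma as stated quietly absorbs.
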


This explicitly describes the Hilbert series of $\HH_i(\S w)$, proving
a conjecture of Rasmussen.
\begin{cor}
  The Hilbert series of $\HH_i(\S w)$ is given by
  \begin{equation*}
    \sum_{i,j}a^iq^j \HH_i(\S w)_{2j}=\prod_{\ell=1}^{n}\frac{1+aq^{k_\ell}}{1-q}
  \end{equation*}
\end{cor}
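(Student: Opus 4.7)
The plan is to derive this directly from Theorem~\ref{equiv-form}, which is the substantive result; the corollary is essentially a bookkeeping exercise once the bigraded structure is known. I would first invoke Theorem~\ref{equiv-form} to identify
\begin{equation*}
  \HH_*(\S w) \cong \land^\bullet(\ga_1,\ldots,\ga_n) \otimes_\C \HT
\end{equation*}
as bigraded $\HT$-modules, where $\ga_i$ carries bidegree $(1,2k_i)$ in the (Hochschild, polynomial) grading, and the second tensor factor $\HT$ has Hochschild degree $0$ throughout.

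Next I would compute the Hilbert series of each tensor factor separately and multiply. The exterior algebra on the $\ga_i$ contributes a single factor $(1+aq^{k_\ell})$ for each generator: indeed, in the double series $\sum_{i,j} a^i q^j \dim(-)_{(i,2j)}$, the element $\ga_\ell$ contributes $aq^{k_\ell}$, and the $\ga_\ell$ anticommute, so one obtains
\begin{equation*}
  \sum_{i,j} a^i q^j \dim \left(\land^\bullet(\ga_1,\ldots,\ga_n)\right)_{(i,2j)} = \prod_{\ell=1}^n (1+aq^{k_\ell}).
\end{equation*}
On the other hand $\HT=\C[\td]$ is a polynomial ring on $n$ generators, each of bidegree $(0,2)$, so its Hilbert series in the same variables is $\prod_{\ell=1}^n (1-q)^{-1}$. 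Since the isomorphism of Theorem~\ref{equiv-form} is a bigraded isomorphism and the Hilbert series of a tensor product of bigraded vector spaces over $\C$ is the product of the Hilbert series, multiplying these two expressions yields the claimed formula.

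The only thing that could conceivably go wrong is a grading mismatch: one must be sure that the $(1,2k_i)$ bidegree asserted in Theorem~\ref{equiv-form} is the same convention appearing in the corollary (i.e.\ that the $q$-variable indeed tracks half the polynomial degree, as encoded by the subscript $2j$ in $\HH_i(\S w)_{2j}$). Since by construction all polynomial degrees in $\HH_*(\S w)$ are even (the generators of $\HT$ live in even degree, as do the $\ga_i$), no odd polynomial degrees appear, and the substitution $q \leftrightarrow q^{1/2 \cdot \text{(polynomial degree)}}$ is unambiguous. Thus no real obstacle arises; the work is entirely absorbed into Theorem~\ref{equiv-form}.
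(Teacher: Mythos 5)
Your proposal is correct and is essentially the paper's own (implicit) derivation: the corollary is stated as an immediate consequence of Theorem~\ref{equiv-form}, and your bookkeeping --- each $\ga_\ell$ of bidegree $(1,2k_\ell)$ contributing $aq^{k_\ell}$ and each of the $n$ polynomial generators of $\HT$ in bidegree $(0,2)$ contributing $(1-q)^{-1}$, multiplied via the bigraded tensor decomposition --- is exactly the intended argument. No gaps.
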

Since $\dim H^*(\Gw)=2^{\rank G}$, Definition/Theorem~\ref{eq-for-def}(3) establishes the equivariant formality of $\Gw$ independently of the earlier results of this paper (and for all types).  

As usual in Lie theory, we define the height $h(\al)=\langle\rho^\vee, \al\rangle$ of a root $\al$ to be its evaluation against the fundamental coweight.

\begin{lem}\label{schub-lemma}
  The cohomology ring $H^*(\Gw/B)$ is a quotient of the polynomial
  ring $\HT$ by a regular sequence $(f_1,\ldots, f_{n})$.  Define
  $k_i=\deg(f_i)$.

  The number of times the integer $m$ appears in the list $k_1,\ldots,
  k_{n}$ is precisely the number of roots in $R^+\cap w^{-1}(R^-)$
  of height $m-1$ minus the number of such roots of height $m$, where
  $R^+$ is the set of positive roots of $G$ and $R^-=-R^+$.
\end{lem}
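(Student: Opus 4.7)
The plan is to split the lemma into two parts: first, showing that $H^*(\Gw/B)$ is a complete intersection quotient of $S$, and second, matching the multiplicities of the degrees $k_1,\ldots,k_n$ with the stated height-difference formula.

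For the first part, I would begin with Borel's presentation $H^*(G/B) \cong S/(S^W_+)$ together with the restriction $H^*(G/B) \twoheadrightarrow H^*(\Gw/B)$, which is surjective because the Schubert classes $\{[X_v]\}_{v \leq w}$ span the target. The composite $S \twoheadrightarrow H^*(\Gw/B)$ has Artinian target, so its kernel has height $n$ and requires at least $n$ generators. When $\Gw/B$ is smooth, exactly $n$ suffice; this follows either from Akyildiz-Carrell (a smooth projective variety with a regular $\mathbb{C}^*$-action, as supplied by a generic one-parameter subgroup of $T$, has complete intersection cohomology) or, more concretely in type $A$, from the iterated Grassmannian bundle description of $\Gw/B$ referenced in Theorem~\ref{ind-HH}, where Leray--Hirsch yields a complete intersection presentation layer by layer.

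For the second part, a complete intersection of the prescribed shape has Hilbert series $\prod_j (1-q^{k_j})/(1-q)^n$, which must equal $P_w(q) = \sum_{v \leq w} q^{\ell(v)}$; this pins down the multiset $\{k_j\}$ uniquely. To relate $\{k_j\}$ to heights in the inversion set, I would generalize the Kostant-Shephard-Todd identity for $w=w_0$: the fundamental degrees $d_j$ of $W$ satisfy $\#\{j : d_j = m\} = \#\{\alpha \in R^+ : h(\alpha) = m-1\} - \#\{\alpha \in R^+ : h(\alpha) = m\}$, reflecting that the partitions of exponents and of positive-root heights are mutually conjugate. For a general smooth $w$, the inversion set $R^+ \cap w^{-1}(R^-)$ takes the place of $R^+$, and an analogous conjugation identity applied to its height distribution yields the claimed formula.

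The main technical difficulty is the combinatorial step in the second part: ensuring that the geometric factorization of $P_w(q)$ translates cleanly into the height-difference statement. In the $w = w_0$ case this is Kostant's theorem on the centralizer of a principal nilpotent, tied to the principal $\mathfrak{sl}_2$-triple in $\mathfrak{g}$. For smooth $\Gw/B$ the analogous ingredient is a chain decomposition of the inversion set supplied by successive steps of the iterated bundle structure; in type $A$ this is manifest from the Grassmannian bundle picture, while in other types one falls back on the abstract Akyildiz-Carrell presentation together with a telescoping combinatorial identity for heights in $R^+ \cap w^{-1}(R^-)$.
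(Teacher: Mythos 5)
Your overall skeleton is workable, and in substance it leans on the same external input as the paper: the Aky{\i}ld{\i}z--Carrell results for smooth Schubert varieties. But two steps are soft in a way that matters. First, the principle you invoke for the complete-intersection claim --- that a smooth projective variety with a ``regular $\C^*$-action'' has complete intersection cohomology --- is false in that generality (e.g.\ the blow-up of $\mathbb{P}^2$ at two torus-fixed points carries a $\C^*$-action but its cohomology, with Hilbert series $1+3q+q^2$, is not a complete intersection). What is actually available, and what the paper uses, is the Carrell--Lieberman/Aky{\i}ld{\i}z--Carrell statement for a vector field with a single zero, which for a smooth $\Gw/B$ gives the concrete presentation $H^*(\Gw/B)\cong \C[BwB/B]/(g_1,\ldots,g_{\ell(w)})$ with $\C[BwB/B]$ a polynomial ring on generators of degree $2h(\al)$, $\al\in R^+\cap w^{-1}(R^-)$, and the $g_i$ a regular sequence in degrees $2h(\al)+2$. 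Second, and more seriously, your ``main technical difficulty'' --- the generalized conjugation identity relating the factorization of $P_w(q)$ to heights of the inversion set --- is deferred to an unspecified ``telescoping combinatorial identity'' (or, in type $A$, to an unexecuted iterated-Grassmannian computation). That identity is exactly the generalized Kostant--Macdonald identity, i.e.\ the main theorem of the Aky{\i}ld{\i}z--Carrell paper you cite: once you have their presentation, the Hilbert series is immediately $\prod_{\al}\bigl(1-q^{h(\al)+1}\bigr)/\bigl(1-q^{h(\al)}\bigr)$, and no principal $\mathfrak{sl}_2$, conjugate-partition, or new root-combinatorial input is needed. As written, the heart of the lemma in general type is left unproven rather than reduced to the cited reference.

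For comparison, the paper's route is more hands-on and avoids both soft spots at once: starting from the Aky{\i}ld{\i}z--Carrell presentation, it uses surjectivity of $H^*(G/B)\to H^*(\Gw/B)$ (generation by degree-$2$ Chern classes) to eliminate each higher-degree polynomial generator of $\C[BwB/B]$ against a relation of the same degree, checking that the remaining relations still form a regular sequence; this simultaneously produces the length-$n$ regular sequence over $\HT$ and yields the degree count (relations of a given degree minus eliminated generators of that degree) directly, with no appeal to presentation-independence of complete intersections or to Hilbert-series uniqueness. Your abstract route (complete intersection property is independent of the presentation, then pin down the $k_j$ by comparing $\prod_j(1-q^{k_j})/(1-q)^n$ with $P_w(q)$) is a legitimate alternative, but to be complete you must (i) replace the false general principle by the actual Aky{\i}ld{\i}z--Carrell presentation, and (ii) recognize that the height identity you plan to ``generalize'' is that same theorem rather than something still to be proved.
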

\begin{proof}
  For ease, assume $\Gw$ is not contained in a parabolic subgroup.  By
  results of Aky\i ld\i z and Carrell \cite{AC89}, the ring $H^*(\Gw/B)$
  is a quotient of $\C[BwB/B]$ by a regular sequence of length
  $\ell(w)$ by a regular sequence $(g_1,\ldots, g_{k})$ where $k=\ell(w)$.

  In this grading, $\C[BwB/B]$ is a polynomial algebra generated by
  elements of degree $2h(\al)$ for each root $\al\in R^+\cap
  w^{-1}(R_-)$, and the degrees of $g_i$ are given by $2h(\al)+2$ as
  $\al$ ranges over the same roots.  Corresponding to the simple roots
  are $n-1$ generators $x_1,\ldots x_{n}$ of degree 2, which are the
  first Chern classes of line bundles on $G/B$ corresponding the
  fundamental weights, and for the other roots we have $\ell(w)-n+1$
  other generators $y_n,\cdots y_{\ell(w)}$ of higher degree.  Here,
  we assume these are in increasing order by degree.  

  It is a well known fact that the cohomology ring $H^*(G/B)$ is
  generated by the Chern classes $x_i$.  Since the natural pullback
  map $H^*(G/B)\to H^*(\Gw/B)$ is onto, the ring $H^*(\Gw/B)$ is as
  well.  That is, if $p=\deg y_{k}>2$, then
  \begin{equation*}
    y_k-\sum_{\deg(g_i)=p}\be_ig_i\in \sum_{\deg(g_j)<p} \HT g_j.
  \end{equation*}
  Since $y_k\notin \sum_{\deg(g_j)<p}\HT g_j$, we can eliminate $y_k$
  and any single relation $g_i$ such that $\be_i\neq 0$.  Obviously, 
  $(g_1,\ldots,g_n)\setminus\{g_i\}$ is again a regular sequence in
  $\C[x_1,\ldots,x_{n},y_n,\ldots, y_{k-1}].$ Applying this argument
  inductively, we obtain a subsequence $(g_{i_1},\ldots, g_{i_{n}})$
  which is regular in $\HT$, which is the desired sequence.

  Since the number of relations of degree $j$ which have been
  eliminated is the number of generators of degree $j$, the remaining
  number of relations is precisely the difference between these, which
  is also the number of roots of height $j/2-1$ minus the number of
  height $j/2$ in $R^+\cap w^{-1}(R^-)$.
\end{proof}
\begin{proof}[Proof of Theorem~\ref{equiv-form}]
  Applying the Hirsch lemma (as stated in the paper \cite{DGMS}) to the
  fibration $B\to \Gw\to \Gw/B$, we see that the cohomology ring
  $H^*(\Gw)$ is the cohomology of the dg-algebra
  $H^*(\Gw/B)\otimes_\HT\EuScript{K}_T$ where $\EuScript{K}_T$ is the
  Koszul complex of $\HT$ (the natural free resolution of $\C$ as an
  $\HT$-module equipped with the Yoneda product).

  Since $(f_1,\ldots, f_{n})$ is regular, $H^*(\Gw/B)$ is
  quasi-isomorphic to the Koszul complex $\EuScript{K}_{\Bf}$.  Thus,
  the cochain complex of $\Gw$ is quasi-isomorphic to
  $\EuScript{K}_{\Bf}\otimes_\HT\C$.  This is just an exterior algebra
  over $\C$ with generators $\gamma_1,\ldots, \gamma_{n}$ with
  degree given by $\deg(\gamma_i)=2k_i-1$.

  Since we used a Koszul complex, the generators $\ga$ land in
  Hochschild degree 1 under the restriction map to $T$, so the degree
  of $\gamma_i$ is $(2k_i,1)$.

  If $w\in W'$, where $W'$ is a proper parabolic subgroup of $W$ (with
  corresponding Levi subgroup $G'\subset G$), and $W'$ is the minimal
  such parabolic.  Then $G_w\cong G'_w\times T/T'\times N\cap
  w_0'Nw_0'$.  Since $N\cap w_0'Nw_0'$ is unipotent and thus
  contractible, by K\"unneth, we have
  \begin{equation*}
\dim_\C H^*(G_w)\cong \dim_\C H^*(G_{w'})\dim_\C H^*(T/T')=2^{\dim T'}2^{\dim T/T'}=2^{\dim T}. \qedhere
\end{equation*}
Thus, $G_w$ is equivariantly formal and the degrees $k_i$ for $G_w$
are simply those of $G_{w'}$ extended by adding 1's.
\end{proof}


\def\cprime{$'$}

\end{document}